\numberwithin{equation}{section}
\newcommand{\wh}{\widehat}
\newtheorem{theo}{{\sc \bf Theorem}}[section]
\newtheorem{cor}[theo]{{\sc \bf Corollary}}
\newtheorem{lem}[theo]{{\sc \bf Lemma}}
\theoremstyle{definition}
\begin{document}

\title{Spectral Triples for nonarchimedean local fields}

\author{Slawomir Klimek}
\address{Department of Mathematical Sciences,
Indiana University-Purdue University Indianapolis,
402 N. Blackford St., Indianapolis, IN 46202, U.S.A.}
\email{sklimek@math.iupui.edu}

\author{Sumedha Rathnayake}
\address{Department of Mathematics,
University of Michigan,
530 Church St., Ann arbor, MI 48109, U.S.A.}
\email{sumedhar@umich.edu}

\author{Kaoru Sakai}
\address{Department of Mathematical Sciences,
Indiana University-Purdue University Indianapolis,
402 N. Blackford St., Indianapolis, IN 46202, U.S.A.}
\email{ksakai@iupui.edu }

\date{\today}

\begin{abstract}
Using associated trees, we construct a spectral triple for the C$^*$-algebra of continuous functions on the ring of integers $R$ of a nonarchimedean local field $F$ of characteristic zero, and investigate its properties. Remarkably,  the spectrum of the spectral triple operator is closely related to the roots of a $q$-hypergeometric function. We also study a non compact version of this construction for the C$^*$-algebra of continuous functions on $F$, vanishing at infinity. 
\end{abstract}

\maketitle
\section{Introduction}

The spectral aspect of noncommutative geometry \cite{C1}, \cite{C2} consists of studying the topology and geometry of noncommutative spaces through spectral invariants of an associated geometric operator. The abstract definition of a first-order elliptic operator is given by the concept of a spectral triple. While there are variations of the definition of a spectral triple depending on the context, we use the following for our purpose: a spectral triple for a $C^*$-algebra $A$ is a triple $(\mathcal A, \mathcal H, \mathcal D)$ where $\mathcal H$ is a Hilbert space on which $A$ acts by bounded operators, i.e. there exists a representation $\rho: A \rightarrow B(\mathcal H)$, $\mathcal A$ is a dense $^*$ - subalgebra of $A$, and $\mathcal D$ is an unbounded self-adjoint operator in $\mathcal H$ satisfying: 

(1) for every $a \in \mathcal A$, the commutator $[\mathcal D, \rho(a)]$ is bounded,

(2) for every $a \in \mathcal A$, $\rho(a)(1+ \mathcal D ^2)^{-1/2}$ is a compact operator.
\newline If the algebra $A$ is unital, condition (2) is reduced to:

(2)$^{\prime}$ the resolvent $(1+ \mathcal D^2)^{-1/2}$ is a compact operator.
\newline Moreover,  $(\mathcal A, \mathcal H, \mathcal D)$ is said to be an even spectral triple if there is a bounded operator $\gamma$ on  $\mathcal H$ with $\gamma= \gamma^*$, $\gamma^2=1$ such that $\gamma$ commutes with the representation $\rho$ and anti-commutes with the operator $ \mathcal D$.

In our two previous papers \cite{KMR}, \cite{KRS} we constructed and studied a natural, even spectral triple for  $C(\mathbb Z_p)$, the algebra of continuous functions on the space of $p$-adic integers. Topologically, the space of $p$-adic integers is a Cantor set; however our spectral triple is very different from those that were defined on general Cantor sets by \cite{C1} (chapter 14), \cite{CI}, and \cite{BP}, in the sense that we use the arithmetic of $\mathbb Z_p$ for the construction of our spectral triple. Nonetheless, we follow a similar starting point as \cite{BP}; through Michon's correspondence \cite{M} we associate a tree to a Cantor set equipped with a nonarchimedean metric.  Our operator $\mathcal D$ uses the ring structure of $\mathbb Z_p$ in its definition, and is a form of discrete differentiation similar to many common examples of spectral triples.

Nonarchimedean local fields of characteristic zero are finite extensions of the field of $p$-adic numbers. Alternatively, a nonarchimedean local field $F$ is a locally compact normed field of characteristic zero, where the norm $|\cdot|_p: F \rightarrow \mathbb R_{\geq 0}$ satisfies the  strong triangle inequality $|x+y|_p \leq \max\{|x|_p, |y|_p\}$. The ring of integers of $F$ is defined to be
\begin{equation*}
R:=\{x\in F \; : \; |x|_p \leq 1\},
\end{equation*}
which is a compact metric space, in fact, a Cantor set \cite{C}.

The purpose of this paper is to extend the construction and the results of the two previous papers \cite{KMR} and \cite{KRS} to nonarchimedean local fields. Here we consider two cases; the compact case of $C(R)$, the unital C$^*$-algebra of continuous functions on the ring of integers which is analogous to the situation in \cite{KMR}, \cite{KRS} and the noncompact $C_0(F)$, the non-unital C$^*$-algebra of continuous functions on $F$ vanishing at infinity, which was not considered in our previous papers.

The spectral triples we study in this paper are constructed using associated trees and derivatives on trees. In the compact case, we associate to $R$ equipped with the $p$-adic norm, the tree denoted by $(V_R, E_R)$, where the set of vertices, $V_R$,  consists of balls in $R$.  We then take the Hilbert space of weighted $\ell^2$ functions, $H^R:= \ell^2(V_R,\omega)$,  where the weight function $\omega$ is given by $\omega(v)=$volume$(v)$ for a vertex $v\in V_R$, and consider the action in $H^R$ of the forward derivative $D^R$ on the tree. The representation $\rho_R: C(R) \rightarrow B(H^R)$ of the algebra $C(R)$ on $H^R$  is simply given by the multiplication by the value of $f \in C(R)$ at the preferred center of the ball $v\in V_R$.  We prove that the operator $D^R$ is invertible and $(D^R)^{-1}$ is compact but it is not a Hilbert Schmidt operator unless $R=\mathbb Z_p$. Then we show that the commutator $[D^R, \rho_R(f)]$ is bounded if and only if $f$ belongs to $\mathcal A_R$, the algebra of Lipschitz functions on $R$ (recall that a function $f\in C(R)$ is called Lipschitz if there is a positive constant  $M$ such that $|f(x)-f(y)| \leq M |x-y|_p$). In particular, these results imply that $(\mathcal A_R, \mathcal H^R= H^R \oplus H^R, \mathcal D^R)$, where 
\begin{equation*}
\mathcal D^R= \left(
\begin{array}{cc}
0 & D^R \\
(D^R)^{\ast} & 0
\end{array}\right),
\end{equation*}
is an even spectral triple for $A=C(R)$. Additionally, we prove that the spectral seminorm $L_{\mathcal D^R}(f)= \|[\mathcal D^R,f]\|$ is equivalent to the Lipschitz seminorm (recall that the Lipschitz seminorm of  a function $f$ is $\sup_{x\neq y} \frac{|f(x)-f(y)|}{|x-y|_p}$). We then describe the spectrum of $(D^R)^*D^R$ and some properties of the corresponding zeta function;  the eigenvectors of $(D^R)^*D^R$ turn out to be composed of $q$-hypergeometric functions \cite{ABC}.

To construct a spectral triple for $C_0(F)$ we proceed the same way as in the compact case but using the full (bi-infinite) tree $(V_F, E_F)$ of all balls in $F$, see also \cite{S}.  Again we verify that the forward derivative $D^F$ on the tree is invertible and we study compactness of $\rho_F(f)(D^F)^{-1}$ where $\rho_F: C_0(F) \rightarrow B(H^F)$ is a suitably chosen representation. This allows us to exhibit a dense subalgebra $\mathcal A_F$ in $C_0(F)$ that gives an even spectral triple $(\mathcal A_F, \mathcal H^F= H^F \oplus H^F, \mathcal D^F)$ where 
\begin{equation*}
\mathcal D^F= \left(
\begin{array}{cc}
0 & D^F \\
(D^F)^{\ast} & 0
\end{array}\right).
\end{equation*}

The paper is organized as follows; in section 2 we motivate the construction of a spectral triple by briefly discussing  standard construction of spectral triples for $C(S^1)$ and $C_0(\mathbb R)$. Section 3 discusses basic notations, definitions, $p$-adic Fourier analysis and  trees associated to local fields.  In section 4 the forward derivative operators are introduced on these trees and their invertibility and compactness are discussed. The construction of the spectral triples for $C(R)$ and $C_0(F)$ are presented in section 5 along with a comparison of the distances  induced by the spectral seminorms and the Lipschitz seminorms. Section 6 explores the spectrum of $(D^R)^*D^R$. Here we prove that the eigenvalues of $(D^R)^*D^R$ are roots of the $q$-hypergeometric function $_1\phi_1\left(\substack {0\\q};q,\lambda \right)$. Moreover, we discuss some properties of $((D^R)^*D^R)^{-1}$ and the analytic continuation of the zeta function associated with $(D^R)^*D^R$.


\section{Examples of Spectral Triples}

To motivate the construction of the spectral triples in this paper we recall two standard examples of spectral triples for a compact metric space ($S^1$) and a noncompact space ($\mathbb R$).

{\bf Example 1}: Let $A=C(S^1)$ be the unital  C$^*$-algebra of continuous functions on the unit circle and consider the Hilbert space  $H:= L^2(S^1)$. There is a natural representation $\rho: A \rightarrow B(H)$ given by $\rho(f) \phi(x)=f(x)\phi(x)$ for $f\in A$, $\phi \in L^2(S^1)$. Now consider the action of the operator $D= \frac 1i \frac d{d \theta}$ on the Hilbert space $H$; since $D e^{in\theta}= n e^{in\theta}$, the spectrum of $(1+D^2)^{-1/2}$ consists of  $\left\{\frac 1{\sqrt{1+n^2}} \; :\; n \in \mathbb Z\right\}$.  Therefore, the operator $(1+D^2)^{-1/2}$ is compact and in fact belongs to $(1+\epsilon)$ -Schatten class for $\epsilon >0$. The following algebra: 
\begin{equation*}
\mathcal A= \left\{ f\in C(S^1) \; : \;\sup_{\theta}|f'(\theta)| < \infty| \right\},
\end{equation*}
 is a dense sub-algebra of $A$, and has the property that the commutator $[D, \rho(f)]$ is bounded if and only if $f\in \mathcal A$. Thus the triple $(\mathcal A, H, D)$ is a spectral triple for the C$^*$-algebra $C(S^1)$.

{\bf Example 2}: A similar construction yields a spectral triple for  $A= C_0(\mathbb R)$,  the non-unital C$^*$-algebra of continuous functions on $\mathbb R$ vanishing at infinity.
The algebra $A$ can be represented on the Hilbert space $H=L^2(\mathbb R)$ by multiplication operators; the representation $\rho: A \rightarrow B(H)$ is given by $\rho(f)\phi(x)=f(x)\phi(x)$, where $f\in A$, $\phi \in L^2(\mathbb R)$. Next consider the operator $D= \frac 1i \frac d{dx}$ on $H$. The commutator $[D, \rho(f)]$ is bounded if and only if $\sup_{x\in \mathbb R} |f'(x)|<\infty$. The resolvent $(1+D^2)^{-1/2}$ is not a compact operator, it has a continuous spectrum. However, computing the Hilbert-Schmidt norm of $\rho(f)(1+D^2)^{-1/2}$ we see that
\begin{equation*}
\left\| \rho(f)(1+D^2)^{-1/2} \right\|_{HS}^2 = \int|f(x)|^2 dx \int\frac1{1+k^2}\frac{dk}{2\pi}.
\end{equation*}
 Thus, if we let $\mathcal A$ be the dense $^*$-subalgebra of $A$ given by 
\begin{equation*}
\mathcal A= \left\{ f \in C_0(\mathbb R) \;:\; f\in L^2(\mathbb R), \; \sup_{x\in \mathbb R} |f'(x)| <\infty \right\},
\end{equation*}
it follows that $ \rho(f)(1+D^2)^{-1/2}$ is a compact operator for every $f\in \mathcal A$, and $(\mathcal A, H, D)$ is a spectral triple for the algebra $A$.


\section{Notation and Construction}

\subsection{Local fields}
The main object of study in this paper is a nonarchimedean local field $F$ of characteristic zero. In other words, $F$ is a finite (algebraic) extension of the field of $p$-adic numbers $Q_p$ for some $p$. The $p$-adic absolute value on $\mathbb Q_p$ extends uniquely to an absolute value $|\cdot|$ on $F$ satisfying the properties:
\begin{enumerate}
\item $|x|=0$ if and only if $x=0$,
\item $|xy|= |x||y|$,
\item $|x+y| \leq \max\{|x|, |y|\}$ (nonarchimedean property),
\end{enumerate}
for all $x,y \in F$. The field $F$ is a locally compact space with respect to the topology induced by the norm $|\cdot|$. This norm is normalized so that it coincides with the usual $p$-adic norm on $\mathbb Q_p$. The ring of integers $R$ of the field $F$ is defined as 
\begin{equation*}
R=\{x \in F \; : \; |x|\leq 1\}.
\end{equation*}
The unique maximal ideal of $R$ is 
\begin{equation*}
P=\{x \in F \; : \; |x|< 1\},
\end{equation*}
and the finite field $R/P$, called the residue class field of $F$, is a finite extension of $\mathbb F_p= \mathbb Z/ p\mathbb Z \cong \mathbb Z_p/p\mathbb Z_p $ (the residue class field of $\mathbb Q_p$).  If $\left[ R/P  : \mathbb F_p \right]=f$ then $R/P \cong \mathbb F^{p^f}$. Since the $p$-adic norm is discrete, the ideal $P$ is a principal ideal, hence $P=(\pi)$ for some number $\pi \in P$. We call $\pi$ a local uniformizer of $R$. Alternatively, any element $\pi \in P$ is called a uniformizer if it has the property $|\pi|= \sup_{x\in P} |x|$. Let $S:=\{s_i\} \subset R$ be a set of representatives, which includes zero, of the residue field $R/P$.  If $a \in F$, then $a=\pi^n \epsilon$ for some $n \in \mathbb Z$ and unit $\epsilon$ (i.e. $|\epsilon| =1$). Consequently, $a$ has the representation: 
\begin{equation}\label{padic_series}
a=\sum_{k=k_0}^{\infty} a_k \pi^k \;\; \textnormal{ where } a_k \in S.
\end{equation}

The value group, i.e., the subgroup of $\mathbb R_{>0}$ consisting of the values of $|\cdot|: F^{\times} \rightarrow \mathbb R_{>0}$ consists of $\{p^{j/e}  :  j \in \mathbb Z\}$. The positive integer $e$ for which $|\pi|= p^{-1/e}$ is called the ramification index of $F$ over $\mathbb Q_p$. In fact, $[F : \mathbb Q_p]=ef$ and the extension $F$ is said to be unramified when $e=1$ and totally ramified when $f=1$. \cite{R}  

\subsection{Harmonic analysis on $p$-adic fields \cite{RV}}

We choose a character $\chi: F \rightarrow S^1$ with the property that, if $\chi(xy)=1$ for every $x\in R$ then $y\in R$ as well. The map $x \mapsto \chi_x(y)=\chi(xy)$ for $x\in F$ is an isomorphism between $F$ and $\widehat{F}$, the character group of $F$.  
%
Moreover, two such characters $\chi_x$ and $\chi_y$ coincide on $R$ if and only if $(x-y) \in R$. Consequently, the character group of $R$ is given by $\widehat{R}= F/R$. Since $R$ is a compact abelian topological group, $\widehat R $ is a discrete abelian group which is an extension of the Pr\"ufer $p$-group $\mathbb Q_p / \mathbb Z_p$. 

Let $dx$ be the additive Haar measure on $F$ normalized so that $\int_R dx=1$. Notice that if $B_n(y)$ is the ball described by $B_n(y)=\{x \in F \; : \; |x-y| \leq p^{-n/e}\}$ then the volume of this ball is $p^{-nf}$.

We let $\mathcal E(F)$ denote the space of test functions on F, i.e., functions on $F$ that are locally constant with compact support. The space of locally constant functions on $R$ is denoted by $\mathcal E(R)$. The spaces of distributions on $F$ and $R$ are the spaces of linear functionals on  $\mathcal E(F)$ and $\mathcal E(R)$ respectively, and are denoted by $\mathcal E^*(F)$ and $\mathcal E^*(R)$. Note that such linear functionals are automatically continuous.

Since  $F$ and $\widehat{F}$ are isomorphic, the corresponding spaces of test functions and the spaces of distributions are identified. The space of test functions $\mathcal E(\widehat R)$  is the space of functions on $\widehat R$ which are zero almost everywhere. The space of distributions $\mathcal E^*(\wh R)$ can be identified with the space of all functions on $\widehat R$.

The Fourier transform of a test function $\phi$ on $F$ is the function $\wh\phi$ on  $F$ given by:
\begin{equation*}
\wh\phi(a)=\int_{F}\phi(x)\overline{\chi_a(x)}\,dx.
\end{equation*}
The Fourier transform gives an isomorphism between $\mathcal E(F)$ and $\mathcal E(\wh{F})$, and, by duality, between 
$\mathcal E^*(F)$ and $\mathcal E^*(\wh{ F})$.

The Fourier transform of a test function $\phi$ on $R$ is the function $\wh\phi$ on  $\wh R$ given by the formula above on classes of $a$ in $\wh R=F/R$.
It can be easily verified that, for a ball  $B$ with radius r,  we have $\int_B{\chi_a(x)}\,d_px=0$ whenever $|a|\ge 1/r$. Consequently, only finite number of Fourier coefficients of a locally constant function are nonzero. The Fourier transform gives an isomorphism between $\mathcal E(R)$ and $\mathcal E(\wh R)$. The inverse Fourier transform is:
\begin{equation*}
\phi(x)=\sum_{[a]\in \wh R}\wh\phi([a])\chi_a(x).
\end{equation*}

If $T\in \mathcal E^*(R)$ is a distribution on $R$ then its Fourier transform is the function $\wh T$ on $\wh R$ defined by:
\begin{equation*}
\wh T([a])=T\left(\overline{\chi_a(x)}\right).
\end{equation*}
The Fourier transform is an isomorphism between $\mathcal E^*(R)$ and $\mathcal E^*(\wh R)$, with inverse given by:
\begin{equation*}
T=\sum_{[a]\in \wh{\mathbb Z}_p}\wh T([a])\chi_a(x).
\end{equation*}
The formal sum above makes distributional sense because test functions on $\wh R$ are non zero only at a finite number of points.

As usual, the distributional Fourier transform $T\mapsto \wh T$ gives a Hilbert space isomorphism:
\begin{equation*}
L^2(R, d_px) \cong  l^2(\wh R).
\end{equation*}


\subsection{Trees associated to local fields}

We can associate a weighted rooted tree to $R$ and $F$  via a version of the Michon's correspondence \cite{M}, see also \cite{S}. The corresponding trees will be denoted by $(V^R, E^R)$ and  $(V^F, E^F)$ respectively. We first describe how to obtain the tree corresponding to $R$. The tree corresponding to $F$ will then be obtained using a similar construction.

The set $V^R$ representing the vertices of the tree $(V^R, E^R)$ consists of all balls in $R$. The set of vertices has the natural decomposition $V^R= \cup_{n=0}^{\infty} V_n^R$ where $V_n^R$ consists of balls in $R$ of radius $p^{-n/e}$. We say there is an edge $e=(v,v')$ between two vertices $v, v' \in V^R$ if  $v\in V_n$, $v'\in V_{n+1}$, and  $v'\subset v$.  Thus the set of edges $E^R$ consists of unordered pairs of vertices. 

The description of the tree corresponding to $(V^F, E^F)$ is similar.  The set $V^F$ also has the natural decomposition $V^F=\cup_{n\in \mathbb Z} V_n^F$ where $V_n^F$ is the set of balls in $F$ with radius $p^{-n/e}$, and there is an edge between two vertices in precisely the same situation as for the tree $(V^R, E^R)$. The tree $(V^R, E^R)$ is a subtree of  $(V^F, E^F)$.

If $x$ and $x'$ are two points inside the same ball of radius $p^{-n/e}$, then $x-x' \in \pi^n R$. Consequently $V_n^R \cong R/ \pi^n R$. Thus $V_n^R$ is a finite set with $p^{nf}$ elements. We also have $V_n^F \cong F/ \pi^n R$ which is an infinite set.  

The decomposition (\ref{padic_series}) enables us to identify a preferred center inside each ball. 
Define the following sets:  
\begin{equation*}
X_n^R:=\{x\in R:\ x=\sum_{k=0}^{n-1} x_k \pi^k\}\ \textnormal{and}\ X_n^F:=\{x\in F:\ x=\sum_{k=k_0}^{n-1} x_k \pi^k \text{ for some } k_0 \in \mathbb Z\}.
\end{equation*}
Since $R/ \pi^n R \cong X_n^R$, we see that there is exactly one $x\in X_n^R$ inside each ball in $R$ of radius $p^{-n/e}$.  
As a result, we can parametrize the tree $V^R$ using $\left\{ (n,x) \; \vert \; n \in \mathbb Z_{\geq 0}, x \in X_n^R\right\}$. Similarly, the parametrization of $V^F$ can be done using $\left\{ (n,x) \; \vert \; n \in \mathbb Z, x \in X_n^F\right\}$.

If $v \in V_n^R$ with center $x$ and $v' \in V_{n+1}^R$ with center $x'$, then there is an edge between them if and only if $x'-x =s \pi^{n+1}$ for some $s\in S$. Hence  $(V^R, E^R)$  is a regular tree with deg$(v)=p^f+1$ for each $v \in V^R$. Moreover, it is a subtree of $(V^F, E^F)$.

The weight $w(v)$ of a vertex $v \in V^R$ (or $V^F$) is defined as $w(v)=p^{-nf}$, the volume of the ball.  

Consider the following Hilbert spaces $H^R$ and $H^F$, consisting of weighted $\ell^2$ functions on the vertices $V^R$ and $V^F$ respectively:
\begin{equation*}
\begin{aligned}
H^R&= \ell^2(V^R, w)= \left\{ \phi : V^R \rightarrow \mathbb C \; : \sum_{v\in V^R} |\phi(v)|^2w(v) < \infty \right\}\\
H^F&= \ell^2(V^F, w)=  \left\{ \phi : V^F\rightarrow \mathbb C \; : \sum_{v\in V^F} |\phi(v)|^2w(v) < \infty \right\}.
\end{aligned}
\end{equation*}

The decompositions of the vertex sets induce a natural decomposition of the above two Hilbert spaces as follows: 
\begin{equation*}
H^R= \bigoplus_{n \in \mathbb Z_{\geq 0}} \ell^2(V_n^R, p^{-nf})\;\; \textnormal{  and } \;\;H^F= \bigoplus_{n \in \mathbb Z} \ell^2(V_n^F, p^{-nf}).
\end{equation*}

To distinguish between the arguments for the function $\phi$ and its Fourier transform, we introduce the dual tree denoted by $\widehat{V^R}\cong V^R$. The identification of the vertex set $V_n^R$ with the finite set $R/\pi^n R$ of $p^{nf}$ elements implies that, the Fourier transform of a function $\phi =\{\phi_n\} \in \ell^2(V^R, w)$, is the usual discrete Fourier transform on each $V_n^R$, i.e. it is the function $\widehat{\phi}_n \in \ell^2(\widehat{V_n^R})$ given by:
\begin{equation*}
\widehat{\phi}_n(y)= \sum_{x \in X_n^R} \phi_n(x) \chi\left(-\frac{xy}{\pi^n}\right)p^{-nf}.
\end{equation*}
The inverse Fourier transform is:
\begin{equation*}
\phi_n(x)= \sum_{y \in X_n^R} \widehat{\phi}_n(y) \chi\left(\frac{xy}{\pi^n}\right).
\end{equation*}
Moreover, we have the Parseval's identity: 
\begin{equation*}
p^{-nf} \sum_{x \in X_n^R} |\phi_n(x)|^2 =\sum_{y \in X_n^R}|\widehat{\phi}_n(y)|^2
\end{equation*}
hence the Fourier transform gives an isomorphism between the Hilbert spaces: $$\ell^2(V_n^R, p^{-nf}) \cong \ell^2(\widehat{V_n^R}).$$

Similarly, for $\phi_n \in \ell^2 (V_n^F, p^{-nf})$, its Fourier transform is the function $\widehat{\phi}_n\in L^2(\pi^{-n}R, dx)$ defined by:
\begin{equation*}
\widehat{\phi}_n(\xi)= \sum_{x \in X_n^F} \phi_n(x) \chi\left(-x\xi\right)p^{-nf} \; \textnormal{ for }\; \xi \in \pi^{-n}R.
\end{equation*}
The inverse Fourier transform is:
\begin{equation*}
\phi_n(x)= \int_{\pi^{-n}R} \widehat{\phi}_n(\xi) \chi\left(x\xi \right) d\xi.
\end{equation*}
Again, the Parseval's identity
\begin{equation*}
p^{-nf} \sum_{x \in X_n^F}|\phi_n(x)|^2= \int_{\xi \in \pi^{-n}R} |\widehat{\phi}_n(\xi)|^2 d\xi
\end{equation*} 
holds and the Fourier transform induces an isomorphism: $$\ell^2(V_n^F, p^{-nf}) \cong L^2(\pi^{-n}R, dx).$$

The algebras $C(R)$ and $C_0(F)$ have natural representations in the Hilbert spaces $H^R$ and $H^F$ respectively: the functions are represented as multiplication operators by the values of the centers of the balls, for balls that do not contain zero. For balls that do contain zero, it is more natural to choose nonzero centers whose norms go to infinity as the radius of the balls increases. More precisely, we consider the following representations:
\begin{equation*}
\begin{aligned}
\rho_R&: C(R) \rightarrow \mathcal B(H^R)\\
(\rho_R(a) \phi)_n(x)&=a(x)\phi_n(x) \;\;\textnormal{ for } a\in C(R), x\in X_n^R, \;\;\; \textnormal{ and } 
\end{aligned}
\end{equation*}

\begin{equation*}
\begin{aligned}
\rho_F&: C_0(F) \rightarrow \mathcal B(H^F)\\
(\rho_F(a) \phi)_n(x)&=\begin{cases}
a(x)\phi_n(x) & \textnormal{ for } a\in C_0(F), 0 \neq x\in X_n^F\\
a(\pi^{-n})\phi_n(0) & \textnormal{ for } a\in C_0(F), 0 =x\in X_n^F.
\end{cases}
\end{aligned}
\end{equation*}

The sets $\bigcup_{n\in \mathbb Z_{\geq 0}}X_n^R$ and $\bigcup_{n\in \mathbb Z}X_n^F$ are  dense in $R$ and $F$ respectively. Moreover, their norms are, $||\rho_R(a)||= \sup_{x\in R} |a(x)|$ and $||\rho_F(a)||= \sup_{x\in F} |a(x)|$,  hence the representations are faithful.
\bigskip


\section{A forward derivative on the tree.}\label{derivative_section}

We consider the following operator on the trees $(V^R, E^R)$ and $(V^F, E^F)$, analogous to the operator introduced in our previous paper \cite{KMR}:
\begin{equation*}
(D \phi)_n(x)= p^{n/e} \left(\phi_n(x)-\frac 1{p^f}\sum_{s \in S} \phi_{n+1}(x+ s\pi^n) \right).
\end{equation*}
The versions of $D$ acting on maximal domains in Hilbert spaces $\ell^2(V^R, w)$ and $\ell^2(V^F, w)$ will be denoted by $D^R$ and $D^F$ respectively. That is, if dom$(D^R)=\{\phi\in \ell^2(V^R, w):\ D\phi\in\ell^2(V^R, w)\}$ and dom$(D^F)=\{\phi\in \ell^2(V^F, w):\ D\phi\in\ell^2(V^F, w)\}$ are the maximal domains, then $D^R:= D\vert_{\text{dom}(D^R)}$ and $D^F:= D\vert_{\text{dom}(D^F)}$.

Using the Fourier transform of $\phi_n$, we can write the operators $D^R$ as:

\begin{equation*}
(D^R \phi)_n(x)= p^{n/e} \left(\sum_{y\in X_n^R} \widehat{\phi}_n(y) \chi(\pi^{-n}xy) - \frac 1{p^f} \sum_{s \in S} \sum_{z \in X_{n+1}^R}\widehat{\phi}_{n+1}(z) \chi(\pi^{-(n+1)}(x+ s\pi^n)z) \right).
\end{equation*}
Using the orthogonality of characters,
\begin{equation*}
\frac 1{p^f} \sum_{s\in S}\chi(sz\pi^n)= \begin{cases}
1 & ;\ \pi^n z\in R\\
0 & ;\textnormal{ otherwise,}
\end{cases} 
\end{equation*}
the above formula for $D$ can be written as:
\begin{equation*}
(D^R \phi)_n(x)= p^{n/e} \left(\sum_{y\in X_n^R} \widehat{\phi}_n(y) \chi(\pi^{-n}xy) - \sum_{y \in X_{n+1}^R}\widehat{\phi}_{n+1}(\pi y) \chi(\pi^{-n}x y) \right).
\end{equation*}
Thus the Fourier transform $\widehat D^R$ of $D^R$ is the operator defined in $\ell^2(\widehat{V_n^R})$ by:
\begin{equation*}
(\widehat D^R \widehat \phi)_n(y)= p^{n/e} \left(\widehat{\phi}_n(y) -\widehat{\phi}_{n+1}(\pi y) \right).
\end{equation*}
Similarly, the operator $D^F$ written using Fourier transform is:
\begin{equation*}
(D^F \phi)_n(x)= p^{n/e} \left(\int_{\pi^{-n}R} \widehat{\phi}_n(\xi) \chi(x\xi) d\xi - \int_{\pi^{-n}R}\widehat{\phi}_{n+1}(\xi) \chi(x\xi) d\xi\right).
\end{equation*}
Based on the above formula, we use $\widehat D^F$ to denote the operator defined as
\begin{equation*}
(\widehat D^F \widehat \phi)_n(\xi)= p^{n/e} \left(\widehat{\phi}_n(\xi) -\widehat{\phi}_{n+1}(\xi) \right), \;\textnormal {where } \xi \in \pi^{-n}R\subset \pi^{-(n+1)}R.
\end{equation*}
As before, the operators $\widehat D^R$ and $\widehat D^F$ will be considered on their maximal domains in Hilbert spaces $\ell^2(\widehat{V_n^R})$ and  $L^2(\pi^{-n}R, dx)$.

The formal adjoint of $\widehat D^R$ is given by the formula:
\begin{equation*}
((\widehat D^R)^* \widehat {\psi}))_n(y)= \begin{cases}
p^{n/e} \widehat {\psi}_n(y) & ; \pi \nmid y \\
p^{n/e} \left(\widehat{\psi}_n(y) -p^{-1/e}\widehat{\psi}_{n-1}(\pi^{-1} y) \right) & ; \pi \mid y.
\end{cases}
\end{equation*}

We now conveniently reparametrize the tree using the following one-to-one correspondence between the sets $\{(n,x) \;: \; n\in \mathbb Z_{\geq 0}, x \in X_n^R \}$ and $\{(l,g) \; : \; l \in \mathbb Z_{\geq 0}, g \in F/R\}$.

Given $(n,x)$ with $n \in \mathbb Z_{> 0}$ and $0\ne x \in X_n^R $, let $l$ be the unique nonnegative integer such that $|x|= p^{-l/e}$ and $g:= \frac x{\pi^{n}} \in F/R$. For $x=0$ and any $n \in \mathbb Z_{\geq 0}$  we let $l=n$ and $g=0$. Thus we have the correspondence $(n,x) \mapsto (l,g)$.

Conversely, given $(l,g)$ such that $l \in \mathbb Z_{\geq 0}$ and $0\ne g \in F/R$, let $m\in \mathbb Z_{> 0}$ be  such that $|g|= p^{m/e}$. Then we make the association $(l,g) \mapsto (m+l, \pi^{m+l}g)$. For $g=0$ and any $l \in \mathbb Z_{\geq 0}$  we have $n=l$ and $x=0$. Thus, we have established a one-to-one correspondence between the above two sets of parameters. 

The operators $D^R$ and $(\widehat D^R)^*$ have simpler expressions in terms of the new parameters:
\begin{equation*}
\begin{aligned}
\widehat {D^R} \widehat \phi(l,g)&= p^{(m+l)/e} \left[\widehat \phi (l,g)-\widehat \phi (l+1, g)\right]\\
((\widehat D^R)^* \widehat {\phi}))(l,g)&= p^{(m+l)/e} \left[\widehat \phi (l,g)-p^{-1/e}\widehat \phi (l-1, g) \right]\; ; \; \;\widehat \phi(-1, g)=0,
\end{aligned}
\end{equation*}
if $g\ne 0$. Notice that in this case the above formula can be written as:
\begin{equation*}
(\widehat D^R\widehat \phi) (l,g)= |g|p^{l/e} \left[\widehat \phi (l,g)-\widehat \phi (l+1, g)\right].
\end{equation*}
Additionally, we have:
\begin{equation*}
(\widehat D^R\widehat \phi) (l,0)= p^{l/e} \left[\widehat \phi (l,0)-\widehat \phi (l+1, 0)\right].
\end{equation*}
Consequently, the Hilbert space $\widehat H^R$ and the operator $ \widehat D^R$ have the following decompositions: 

\begin{equation*}
\widehat H^R=\bigoplus_{g \in F/R} \widehat H_g^R ;\;\;\;  \widehat D^R= \bigoplus_{g \in F/R} \widehat D_g^R,
\end{equation*}
where $\widehat D_g^R:= \widehat D^R \vert_{\widehat H_g^R}$, and $\widehat H_g^R\cong \ell^2(\mathbb Z_{\geq 0})$.

The theorem below establishes the main analytical properties of the operator $D^R$.

\begin{theo}
$D^R$ is invertible and $(D^R)^{-1}$ is a compact operator. Moreover, $(D^R)^{-1}$ is a Hilbert-Schmidt operator if and only if $F=\mathbb Q_p$.
\end{theo}

\begin{proof} This theorem can be easily proved using the component operators in the above decomposition for $\widehat D^R$. When $g\neq 0$, we can write:
\begin{equation*}
(\widehat D_g^R) h(l)= |g| p^{l/e} \left[h(l)-h(l+1)\right].
\end{equation*}
Given $(\widehat D_g^R) h(l)=k(l)$ for $k(l)\in \ell^2(\mathbb Z_{\geq 0})$, we can solve for $h(l)$ to obtain: 
\begin{equation*}
(\widehat D_g^R)^{-1} k(l)= h(l)= \frac 1{|g|} \sum_{j \geq l} p^{-j/e}k(j).
\end{equation*}
Additionally, \[(\widehat D_0^R)^{-1} k(l)= \sum_{j \geq l} p^{-j/e}k(j).\]
The limits in the above sums are obtained by requiring $h(l) \in \ell^2(\mathbb Z_{\geq 0})$. Then the Hilbert-Schmidt norm of $(\widehat D_g^R)^{-1} $ is:
\begin{equation}\label{compact_components}
\left\|(\widehat D_g^R)^{-1}\right\|_{HS}^2= \sum_{l=0}^{\infty}\sum_{j\geq l}\frac 1{|g|^2} p^{-2j/e} =  \frac 1{|g|^2}\cdot \frac1{(1-p^{-2/e})^2}.
\end{equation}
Consequently, we have: $$\left\| (\widehat D_g^R)^{-1}\right\| \leq \left\|(\widehat D_g^R)^{-1}\right\|_{HS} = \frac 1{|g|} \cdot \frac 1{(1-p^{-2/e})} \rightarrow 0\ \ {\textrm as\ \ }|g| \rightarrow \infty.$$
Since $(D^R)^{-1}$ is a direct sum of compact operators whose norms go to zero by \eqref{compact_components}, it follows that $(D^R)^{-1}$ is also compact.  

Next we compute the Hilbert-Schmidt norm of $(\widehat D^R)^{-1}$:
\begin{equation*}
\left\|(\widehat D^R)^{-1}\right\|_{HS}^2= \sum_{g\in F/R}\frac 1{|g|^2} \frac 1{(1-p^{-2/e})^2}= \frac 1{(1-p^{-2/e})^2} \sum_{m=0}^{\infty} \frac{c(m)}{p^{2m/e}},
\end{equation*}
where $c(m)= \#\{g\in F/R \; : \; |g|=p^{m/e}\}$. In fact, since
\begin{equation*}
c(m)= \begin{cases}
1 & ;  \textnormal{ if }  m=0\\
p^{mf}(1- \frac 1{p^f}) & ; \textnormal{ otherwise,}
\end{cases}
\end{equation*}
we get: \begin{equation*}
\left\|(\widehat D^R)^{-1}\right\|_{HS}^2=\frac 1{(1-p^{-2/e})^2} \left[ 1+(1-\frac 1{p^f}) \sum_{m=1}^{\infty} \frac 1{p^{m(\frac{2-ef}{e})}}\right],
\end{equation*}
and the sum on the right hand side is convergent if and only if $ef=n=1$. i.e.  $(\widehat D^R)^{-1}$ is a Hilbert-Schmidt operator if and only if $F=\mathbb Q_p$.

\end{proof}
\bigskip

%
 
In the non-compact case, the operator $D^F$ has again no kernel, but its range is only dense in $H^F$. This is best seen from the formula in Fourier transform:
\begin{equation}\label{DFinverse}
((\widehat D^F)^{-1} \widehat \psi)_n(\xi)= \sum_{k=n}^\infty p^{-k/e} \widehat{\psi}_k(\xi).
\end{equation}
which becomes ill-defined as $n\to-\infty$.
We have, however, the following result, relevant for our spectral triples construction.

\begin{theo}\label{compact_op}
Assume $a: F \rightarrow \mathbb C$ is a continuous function such that $|a(x)| = \mathcal O \left(\frac{1}{1+|x|^{\alpha}}\right)$ where $\alpha >1, \alpha > ef/2$. Then, $\rho_F(a)(D^F)^{-1}$ is a compact operator. 
\end{theo}

\begin{proof}
Applying inverse Fourier transform to the formula (\ref{DFinverse}), we get
\begin{equation*}
(\rho_F(a)(D^F)^{-1}\phi)_n(x)= a_n(x)\sum_{k=n}^{\infty} \sum_{y \in X_k^F} p^{-k/e}p^{-kf} \phi_k(y)\int_{\pi^{-n}R} \chi((x-y)\xi) d\xi,
\end{equation*}
where
\begin{equation*}
a_n(x)= \begin{cases}
a(x) & ;\ x\neq 0\\
a(\pi^{n}) & ;\ x=0.
\end{cases}
\end{equation*}
Hence
\begin{equation*}
(\rho_F(a)(D^F)^{-1}\phi)_n(x)= \sum_{k=n}^{\infty} \sum_{\substack{y \in X_k^F \\ (x-y)\in \pi^nR}}  p^{-k/e}p^{f(n-k)} a_n(x) \phi_k(y).
\end{equation*}


To prove compactness of $\rho_F(a)(D^F)^{-1}$ we use an approximation argument. For $t>0$ let 
\begin{equation*}
b_t(k)= \begin{cases}
1 & ; k<0\\
\frac 1{1+tp^{\alpha k/e}} & ; k\geq 0,
\end{cases}
\end{equation*}
and consider the operator $\rho_F(a)(D^F)^{-1} b_t$ given by
\begin{equation*}
\begin{aligned}
\left((\rho_F(a)(D^F)^{-1}b_t)\phi\right)_n(x)&=\sum_{\substack {{n \leq k < \infty}} }\sum_{\substack{y \in X_k^F \\ (x-y)\in \pi^nR}}  p^{-k/e}p^{f(n-k)} a_n(x) b_t(k)\phi_k(y).
\end{aligned}
\end{equation*}
We first show that $(\rho_F(a)(D^F)^{-1}b_t) $ is a Hilbert-Schmidt operator, by estimating its norm:
\begin{equation*}
\left\|(\rho_F(a)(D^F)^{-1}b_t)\right \|_{HS}^2=\sum_{n \leq k <0} \sum_{y \in X_k^F} \sum_{\substack{x \in X_n^F \\ (x-y)\in \pi^nR}}  {p^{\frac{-2k}e + 2f(n-k)}}|a_n(x)|^2.
\end{equation*}

\rem In all of the computations below, the constant $C$ may vary from line to line.

We split the above sums into four parts. First, the contribution from $x=0$ and $k<0$ is less than:
\begin{equation*}
\begin{aligned}
& C \sum_{n \leq k <0} \sum_{\substack{y \in X_k^F \\ y\in \pi^nR}}  \frac {p^{\frac{-2k}e + 2f(n-k)}}{(1+ p^{-\alpha n/e})^2} = \; C \sum_{k<0} p^{\frac{-2k}e -fk} \sum_{n \leq k} \frac{p^{fn}}{(1+ p^{-\alpha n/e})^2}\\
& \leq C \sum_{k<0} \frac{p^{\frac{-2k}e} }{(1+ p^{-\alpha k/e})^2(1-p^{-f})} < \infty. 
\end{aligned}
\end{equation*}

Similarly, for $x=0$ and $k\geq 0$, we bound the contribution to the Hilbert-Schmidt norm by:
\begin{equation*}
\begin{aligned}
&C \sum_{\substack{k \geq 0 \\ k \geq n}} \sum_{\substack{y \in X_k^F \\ y\in \pi^nR}}  \frac {p^{\frac{-2k}e + 2f(n-k)}}{(1+ p^{-\alpha n/e})^2(1+tp^{\alpha k/e})^2} = C \sum_{k \geq 0} \frac {p^{\frac{-2k}e -fk}}{(1+tp^{\alpha k/e})^2} \sum_{n \leq k} \frac{p^{fn}}{(1+ p^{-\alpha n/e})^2}\\
&\leq C \sum_{k\geq 0} \frac {p^{\frac{-2k}e -fk}}{(1+tp^{\alpha k/e})^2 (1+p^{-\alpha k/e})^2}\sum_{n \leq k} p^{fn} =C \sum_{k \geq 0} \frac {p^{\frac{-2k}e }}{(1+tp^{\alpha k/e})^2 (1+p^{-\alpha k/e})^2} < \infty.
\end{aligned}
\end{equation*}

If $x \neq 0$ and $k<0$, then we estimate the contribution by:
\begin{equation*}
\begin{aligned}
&C \sum_{n \leq k <0} \sum_{y \in X_k^F} \sum_{\substack{0\neq x \in X_n^F \\ (x-y)\in \pi^nR}}  \frac {p^{\frac{-2k}e + 2f(n-k)}}{(1+ |x|^{\alpha})^2} =\\
& = C \sum_{k<0} \sum_{n\leq k} \sum_{l<n}  \frac {p^{\frac{-2k}e + f(n-k)}}{(1+ p^{\frac{-l\alpha}e})^2}  p^{f(n-l)} \leq C \sum_{k<0}  p^{\frac{-2k}e - fk}\sum_{n \leq k} p^{2fn}\sum_{l<n}p^{\frac{l(2\alpha -ef )}{e}} = \\ &= C  \sum_{k<0}  p^{\frac{-2k}e - fk}\sum_{n \leq k} p^{\frac{n(2\alpha +ef )}{e}} = \sum_{k<0} p^{\frac{2k(\alpha -1)}{e}} < \infty.
\end{aligned}
\end{equation*}

Finally, in the case when $x \neq 0$ and $k\geq 0$,  the contribution is bounded by:
\begin{equation*}
\begin{aligned}
&C \sum_{k \geq 0}\sum_{n \leq k} \sum_{y \in X_k^F} \sum_{\substack{0\neq x \in X_n^F \\ (x-y)\in \pi^nR}}  \frac {p^{\frac{-2k}e + 2f(n-k)}}{(1+ |x|^{\alpha})^2 (1+tp^{\alpha k/e})^2} \\
&= C \sum_{k\geq 0} \sum_{n\leq k} \sum_{l<n}  \frac {p^{\frac{-2k}e + f(n-k)}}{(1+ p^{\frac{-l\alpha}e})^2} \frac{p^{f(n-l)}}{(1+tp^{\alpha k/e})^2} \leq C \sum_{k\geq 0}  \frac {p^{\frac{-2k}e -fk}}{(1+ tp^{\alpha k/e)^2}} \sum_{n \leq k} p^{\frac {n(2\alpha + ef)}{e}} \\
&= \sum_{k \geq 0} \frac {p^{\frac{2k(\alpha -1)}e}}{(1+ tp^{\alpha k/e})^2}\leq C  \sum_{k\geq 0}  \frac{p^{\frac{2k(\alpha -1)}{e}}}{t^2p^{\frac{2\alpha k}e}} < \infty.
\end{aligned}
\end{equation*}
This proves that $(\rho_F(a)(D^F)^{-1}b_t)$ is a Hilbert-Schmidt operator.

Next we  show that $\left\|(\rho_F(a)(D^F)^{-1}b_t) - \rho_F(a)(D^F)^{-1}\right\|\rightarrow 0$  as $t \rightarrow 0^+$.  To estimate the operator norm of $\rho_F(a)(D^F)^{-1}(b_t-1)$ we use Schur-Young inequality \cite{HS}:
\begin{equation*}\begin{aligned}
&\left\|(\rho_F(a)(D^F)^{-1}(b_t-1)\right\|^2 \leq \\ &\bigg( \sup_{k}\sup_{y\in X_k^F}\sum_n \sum_{x\in X_n^F}|K(k,y; n,x)|\bigg)\bigg( \sup_{n}\sup_{x\in X_k^F}\sum_k \sum_{y\in X_n^F}|K(k,y; n,x)|\bigg),
\end{aligned}
\end{equation*}
where $K(k,y; n,x)$ is the integral kernel of the operator $\rho_F(a)(D^F)^{-1}(b_t-1)$. Below we estimate each of the terms on the right-hand side  of this inequality.

The first factor is:
 \begin{equation*}
\sup_{k\geq 0} \sup_{y\in X_k^F} \sum_{\substack{n \leq k}} \sum_{\substack{ x\in X_n^F \\ (x-y)\in \pi^nR}} p^{\frac{-k}e + f(n-k)}|a_n(x)| \left|\frac{1}{1+tp^{\alpha k/e}} -1 \right|.
\end{equation*}

First notice that given $y\in X_k^F$ there is only one $x\in X_n^F$, with $n\leq k$, such that $(x-y)\in \pi^nR$, namely $x=y$ mod $\pi^n$. Moreover for this $x$ we have $|x|=|y|$.
The part with $y \neq 0$ in the first factor is consequently bounded by:
 \begin{equation*}
C \sup_{k \geq 0} p^{\frac{-k(1+f)}e } \left|\frac{1}{1+tp^{\alpha k/e}} -1 \right| \sup_{0\ne y\in X_k^F} \sum_{n \leq k} p^{fn}\frac 1{1+|y|^{\alpha}}, 
\end{equation*}
where we have used the hypothesis of the theorem on the size of $a(x)$.  Since 
\begin{equation*}
\sup_{y\in X_k^F}\frac 1{1+|y|^{\alpha}}=1,
\end{equation*}
we can estimate the last expression as:

 \begin{equation*}
C \sup_{k \geq 0} p^{\frac{-k}e -fk} \left(1- \frac{1}{1+tp^{\alpha k/e}}\right) \frac{p^{fk}}{(1-p^{-f})} =  \frac{C}{(1-p^{-f})} \sup_{k \geq 0} p^{-k/e} \frac{tp^{\alpha k/e}}{1+ tp^{\alpha k/e}} \rightarrow 0,
\end{equation*}
as $t \rightarrow 0^+$.
When $y=0$, the contribution is bounded by:
 \begin{equation*}
 \begin{aligned}
&C \sup_{k\geq 0} p^{\frac{-k}e -fk} \left(1- \frac{1}{1+tp^{\alpha k/e}}\right)  \sum_{n \leq k} \frac{p^{fn}}{1+tp^{-\alpha n/e}} \\ 
&\leq  \frac{1}{(1-p^{-f})} \sup_{k \geq 0} p^{-k/e} \frac{tp^{\alpha k/e}}{1+ tp^{\alpha k/e}} \rightarrow 0 \textnormal { ( as } t \rightarrow 0^+).
\end{aligned}
\end{equation*}

The second factor in the Schur-Young inequality is:
\begin{equation*}
\sup_{n} \sup_{x\in X_n^F} \sum_{\substack{k \geq n \\ k \geq 0}} \sum_{\substack{ y\in X_k^F \\ (x-y)\in \pi^nR}} p^{\frac{-k}e + f(n-k)}|a_n(x)| \left|\frac{1}{1+tp^{\alpha k/e}} -1 \right|.
\end{equation*}
Since the first factor in that inequality is already going to zero as $t$ goes to zero, all we need to show is that the second factor is bounded.
Notice that, given $x\in X_n^F$, the number of $y\in X_k^F$ with $n\leq k$, such that $(x-y)\in \pi^nR$ is equal to $p^{(k-n)f}$.
Consequently, we can estimate the second factor by:
 \begin{equation*}
C \sup_{n} \sup_{x \in X_n^F} \sum_{\substack{k \geq n \\ k \geq 0}} p^{\frac{-k}e} |a_n(x)|,
\end{equation*}
where we estimated: 
\begin{equation*}
\left(1- \frac{1}{1+tp^{\alpha k/e}}\right)\leq 1.
\end{equation*}
The function $|a_n(x)|$ is uniformly bounded in $x$ and $n$ by assumptions, hence  the above expression is bounded by a uniform constant for every $n \in \mathbb Z$. 
Thus we have:
\begin{equation*}
\left\|(\rho_F(a)(D^F)^{-1}b_t) - \rho^F(a)(D^F)^{-1}\right\|\rightarrow 0
\end{equation*}
 as $t \rightarrow 0^+$ and $\rho_F(a)(D^F)^{-1}$ is a compact operator.
\end{proof}

\rem In fact, with the above assumption on $a(x)$,  the operator $\rho_F(a)(D^F)^{-1}$ is a Hilbert-Schmidt operator for every $a(x)$ if and only $ef=1$, i.e. when $F=\mathbb Q_p$, as can be deduced from the proof above.
\bigskip

\section{Construction of the spectral triples.}
\subsection{A Spectral triple for $C(R)$.}

We first describe a spectral triple for the C$^*$-algebra $C(R)$ and study its properties. Much of the discussion here is a straightforward generalization of the spectral triple construction in \cite{KMR} and will be referred to whenever needed.
 
Let $\mathcal A_R$ be the algebra of Lipschitz functions on $R$, a dense $^*$-subalgebra of $C(R)$.  We use $L_1(a)$ to denote the Lipschitz seminorm of $a\in \mathcal A_R$ defined by: 
\begin{equation}\label{Lip_sem}
L_1(a)=\sup_{\substack{x\neq y \\ x,y \in R}} \frac{|a(x)-a(y)|}{|x-y|_p} \in [0, \infty].
\end{equation}
Also let $\mathcal H^R$ be the direct sum: $\mathcal H^R := H^R \oplus H^R$. Define $\mathcal D^R: \mathcal H^R \rightarrow \mathcal H^R$ as: 
\begin{equation*}
\mathcal D^R= \left(
\begin{array}{cc}
0 & D^R \\
(D^R)^{\ast} & 0
\end{array}\right).
\end{equation*}

The algebra $C(R)$ can be represented in $\mathcal H^R$  using $\Pi_R: C(R) \rightarrow \mathcal B(\mathcal H^R)$ given by  $\Pi_R= \rho_R \oplus \rho_R$. This representation is even, faithful and non-degenerate since $X_n^R$ is dense in $R$.

\begin{theo}\label{sptr_for_R}
The triple $(\mathcal A_R, \mathcal H^R, \mathcal D^R)$ is an even spectral triple.
\end{theo}

\begin{proof}
We first show that the commutator $[\mathcal D^R, \Pi_R(a)]$ is bounded for every $a \in \mathcal A_R$. The formula for the commutator is:
\begin{equation*}
[D^R\rho_R(a)g]_n(x)- [\rho_R(a)D^Rg]_n(x)= \frac 1{p^f} \sum_{i=0}^{p^f -1} \frac{[a_n(x)-a_{n+1}(x+ s_i\pi^n)]}{p^{-n/e}}   g_{n+1}(x+ s_i \pi^n).  
\end{equation*}        
In the case $x \neq 0$ we get:     
\begin{equation*}
\begin{aligned}
\left | (D^R \rho_R(a)g)_n(x)-  (\rho_R(a)D^Rg)_n(x) \right | &\leq \frac 1{p^f} \sum_{i=1}^{p^f -1} \frac{\left|a(x)-a(x+ s_i\pi^n)\right |}{p^{-n/e}}   \left|g_{n+1}(x+ s_i \pi^n) \right|\\
& \leq \frac{L_1(a)}{p^f} \sum_{i=1}^{p^f-1} \left | g_{n+1}(x+s_i \pi^n)\right|.
\end{aligned}
\end{equation*}     
When $x=0$ we estimate:
\begin{equation*}
\begin{aligned}
&\left |(D^R\rho_R(a)g)_n(0)- (\rho_R(a)D^Rg)_n (0)\right |\leq \\
 &\leq \frac 1{p^f} \frac{\left| a(\pi^n) - a(\pi^{n+1})\right|}{p^{-n/e}} |g_{n+1}(0)| + \frac 1{p^f} \sum_{i=1}^{p^f -1} \frac{\left|a(\pi^n)-a(s_i\pi^n)\right |}{p^{-n/e}}   \left|g_{n+1}( s_i \pi^n) \right|\\
& \leq \frac{L_1(a)}{p^f} \left( |g_{n+1}(0)| +\sum_{i=1}^{p^f -1}\left|g_{n+1}( s_i \pi^n) \right|\ \right).
\end{aligned}
\end{equation*}  
Hence, we get:
\begin{equation*}
\begin{aligned}
\left \| D^R \rho_R(a)g-  \rho_R(a)D^Rg\right \|^2 &\leq \sum_{n=0}^{\infty} \sum_{x\in X_n^R} p^{-nf}  \frac{L_1(a)^2}{p^{2f}} \left( \sum_{i=1}^{p^f-1} \left | g_{n+1}(x+s_i \pi^n)\right| \right)^2 \\
& \leq L_1(a)^2 \sum_{n=0}^{\infty} \sum_{x\in X_n^R} p^{-nf-f} \left | g_{n+1}(x+s_i \pi^n)\right|^2,
\end{aligned}
\end{equation*}    
where in the second line we used the fact that $\left(\sum_{i=1}^n a_n\right)^2 \leq n \sum_{i=1}^n a_i^2$ for $a_i \geq 0$. 
Re-labelling the sum we get:
\begin{equation*}
\left \|D^R \rho_R(a)g-  \rho_R(a)D^Rg\right \|^2 \leq  L_1(a)^2 \sum_{n=0}^{\infty} \sum_{x\in X_{n+1}^R} p^{-(n+1)f} \left | g_{n+1}(x)\right|^2 \leq L_1(a)^2 \|g\|^2.
\end{equation*}    
Taking the adjoint in the formula $\left\| \left[D^R, \rho_R(a)\right]\right\| \leq L_1(a)$, we also obtain the inequality $\left\|\left[(D^R)^*, \rho_R(a)\right]\right\| \leq L_1(a)$. This shows that the commutator $[\mathcal D^R, \Pi_R(a)]$ is bounded for every $a\in \mathcal A_R$.    
 
Since $(D^R)^{-1}$ is a compact operator, by functional calculus it follows that $(1+ (\mathcal D^R )^2)^{-1/2}$ is a compact operator. Thus, $(\mathcal A_R, \mathcal H^R, \mathcal D^R)$ is an even spectral triple.
                                                                                                                                                                          
\end{proof}

We will now investigate the distance functions induced by two seminorms: the Lipschitz seminorm $L_1: C(R) \rightarrow [0, \infty]$, defined in (\ref{Lip_sem}), and the spectral seminorm $L_{\mathcal D^R}:C(R) \rightarrow [0, \infty]$ defined by $L_{\mathcal D^R}(a)= \left\| \left[\mathcal D^R, \Pi_R(a) \right]\right\| =  \left\| \left[ D^R, \rho_R(a) \right]\right\| $, where unbounded operators have infinite norm. These two seminorms induce the following two distance functions on $R$: 

\begin{equation*}
\begin{aligned}
\textnormal{dist}_1(x,y)&=\sup_{a\in C(R)} \left\{ \left|a(x)-a(y)\right| \; : \; L_1(a) \leq 1\right\}\\
\textnormal{dist}_{\mathcal D^R}(x,y)&=\sup_{a\in C(R)} \left\{ \left|a(x)-a(y)\right| \; : \; L_{\mathcal D^R}(a) \leq 1\right\},
\end{aligned}
\end{equation*}
for $x,y \in R$.
It is a general fact that the metric $\textnormal{dist}_1$ is equal to the usual $p$-adic metric on $R$, see Proposition 5.1 in \cite{KMR}.
In order to understand the other metric, we need to control the norm of the commutator $\left[ D^R, \rho_R(a) \right]$. This is achieved with the help of the following lemma, the proof of which is similar to the proof of Lemma 5.2 in \cite{KMR}.

\begin{lem}\label{norm_form}
For $a \in C(R)$, \begin{equation*}
\begin{aligned}
L_{\mathcal D^R}(a)^2=   \sup_{\substack{n\in \mathbb Z_{\geq 0} \\ { 0 \neq x\in X_n^R}}} & \left\{ \frac 1{p^f} \sum_{i=1}^{p^f-1} \frac{\left|a(x)-a(x+s_i \pi^n)\right|^2}{p^{-2n/e}},   \frac 1{p^f}  \sum_{i=1}^{p^f-1} \frac{\left|a(\pi^n)-a(\pi^{n+1})\right|^2}{p^{-2n/e}}, \right. \\
&\left. \frac 1{p^f} \sum_{i=1}^{p^f-1} \frac{\left|a(\pi^n)-a(s_i \pi^n)\right|^2}{p^{-2n/e}}\right \}.
\end{aligned}
\end{equation*}
\end{lem}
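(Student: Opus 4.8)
The plan is to compute $L_{\mathcal D^R}(a)^2 = \|[D^R,\rho_R(a)]\|^2$ directly from the explicit formula for the commutator already derived in the proof of Theorem~\ref{sptr_for_R}, namely
\[
[D^R,\rho_R(a)]g)_n(x)= \frac 1{p^f} \sum_{i=0}^{p^f -1} \frac{a_n(x)-a_{n+1}(x+ s_i\pi^n)}{p^{-n/e}}\, g_{n+1}(x+ s_i \pi^n).
\]
The key structural observation is that, fixing $n$ and $x\in X_n^R$, the output value at $(n,x)$ depends only on the $p^f$ input values $g_{n+1}(x+s_i\pi^n)$, $i=0,\dots,p^f-1$, which are exactly the children of the vertex $(n,x)$ in the tree; and each vertex of $V^R$ of level $\ge 1$ is a child of a unique parent. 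Hence $[D^R,\rho_R(a)]$ is, up to the identification of $H^R$ with the weighted $\ell^2$ space, a block-diagonal operator: one block for each vertex $v=(n,x)$, mapping the $\ell^2$ span of the children of $v$ (inside the target copy at level $n$) from a one-dimensional space spanned by $v$. Concretely each block is a rank-one operator, a row vector of length $p^f$, whose operator norm squared is the sum of the squares of the absolute values of its entries, after accounting for the weights $w$.

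The steps I would carry out, in order, are: (i) rewrite the squared norm of the commutator as a supremum over $n$ and over $x\in X_n^R$ of the squared $\ell^2$-norm of the coefficient vector $\bigl(p^{n/e}(a_n(x)-a_{n+1}(x+s_i\pi^n))/p^f\bigr)_{i=0}^{p^f-1}$, being careful to carry the weight factors $w((n,x))=p^{-nf}$ in the source and $w((n+1,x+s_i\pi^n))=p^{-(n+1)f}$ in the target through the rank-one norm computation — the ratio of weights produces the net $1/p^f$ in front of the sum; (ii) separate the case $x\ne 0$ from $x=0$. For $x\ne 0$ one has $a_n(x)=a_{n+1}(x+s_i\pi^n)=a(x)$ only when... no: $a_n(x)=a(x)$ and $a_{n+1}(x+s_i\pi^n)=a(x+s_i\pi^n)$, and the $i=0$ term ($s_0=0$) vanishes, giving the first expression in the Lemma with the sum from $i=1$ to $p^f-1$. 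For $x=0$ one has $a_n(0)=a(\pi^n)$ and $a_{n+1}(s_i\pi^n)=a(\pi^{n+1})$ when $i=0$ and $a(s_i\pi^n)$ when $i\ge 1$ (using $s_0=0$ and the convention $a_{n+1}(0)=a(\pi^{n+1})$), so the $x=0$ block splits off the $i=0$ term and yields the second and third expressions; (iii) take the supremum over all blocks and collect the three families of terms.

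The main obstacle, such as it is, will be bookkeeping rather than anything deep: tracking the weights consistently through the rank-one norm identity so that the $p^{-2n/e}$ in the denominator and the overall $1/p^f$ come out correctly, and handling the $x=0$ vertex separately — there the $i=0$ child is $0\in X_{n+1}^R$ with representation value $a(\pi^{n+1})$, so this term must be pulled out of the generic child sum, which is precisely why the Lemma has three expressions inside the supremum rather than one. Since the paper states this is "similar to the proof of Lemma 5.2 in \cite{KMR}," I would model the write-up on that argument, emphasizing the block/rank-one decomposition and then simply reading off the norm.
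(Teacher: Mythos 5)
Your proposal is correct and follows essentially the same route as the paper's (deferred) proof from Lemma 5.2 of \cite{KMR}: decompose the commutator into mutually orthogonal rank-one blocks indexed by parent vertices, compute each block's norm via Cauchy--Schwarz with the weights $p^{-nf}$ and $p^{-(n+1)f}$ producing the net factor $1/p^f$, and treat the $x=0$ vertex separately because its $i=0$ child carries the value $a(\pi^{n+1})$. The bookkeeping you describe does yield exactly the stated formula, so there is nothing further to add.
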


We can use the formula for the spectral seminorm in the above lemma to prove the following theorem.

\begin{theo} \label{norm_comp}
For every $a\in \mathcal A_R$ we have:
\begin{equation*}
\frac{p^{1/e}-1}{2p^{1/e} \sqrt{p^f}} L_1(a) \leq L_{\mathcal D^R}(a) \leq \sqrt{\frac{p^f-1}{p^f}}L_1(a).
\end{equation*}
\end{theo}

\begin{proof}
Using the Lipschitz property of $a$ we see that:
\begin{equation*}
\frac 1{p^f} \sum_{i=1}^{p^f-1} \frac{\left| a_n(x)- a_{n+1} (x+ s_i \pi^n)\right| ^2}{p^{-2n/e}} \leq \frac{p^f-1}{p^f} L_1(a)^2.
\end{equation*}
Taking the supremum over $n$ and $x$ we obtain the inequality: $L_{\mathcal D^R}(a)^2 \leq  \frac{p^f-1}{p^f} L_1(a)^2$.

To prove the other side of the inequality, we first observe that for all $a \in \mathcal A^R, n\geq 0$ and $x\in R$ we have:
\begin{equation*}
\frac 1{p^f} \sum_{i=1}^{p^f-1} \frac{\left| a_n(x)- a_{n+1} (x+ s_i \pi^n)\right| ^2}{p^{-2n/e}} \leq L_{\mathcal D^R}(a)^2,
\end{equation*}
which implies that:
\begin{equation*}
 \frac{\left| a_n(x)- a_{n+1} (x+ s_i \pi^n)\right| }{p^{-n/e}} \leq \sqrt{p^f} L_{\mathcal D^R}(a).
\end{equation*}

For any $a \in \mathcal A^R$ and $y \neq z \in R$ (both not zero) with $|y-z|_p= p^{-n/e}, n\geq 0$, there exists $x \in X_n^R$ with $x < \pi^n$ so that $y$ and $z$ have the following $p$-adic representations:
\begin{equation*}
\begin{aligned}
y&=x+ y_0\pi^n + y_1 \pi^{n+1} + \cdots \\
z&=x+ z_0\pi^n + z_1 \pi^{n+1} + \cdots 
\end{aligned}
\end{equation*}
with $y_i, z_i \in S$ and $y_0, z_0 \neq 0$. First we look at the case $x\neq 0$. Telescoping, we obtain:
\begin{equation*}
\begin{aligned}
&\left| a(x+y_0\pi^n + \cdots + y_N\pi^{n+N}) - a(x) \right|\leq \\
&\leq \left| a(x) - a(x + y_0\pi^n) \right| + \left| a(x+y_0\pi^n) - a(x + y_0\pi^n + y_1 \pi^{n+1}) \right|+\cdots + \\
&+\left| a(x+ y_0\pi^n + \cdots + y_{N-1}\pi^{n+N-1}) - a(x+ y_0\pi^n + \cdots+ y_N \pi^{n+N}) \right|. 
\end{aligned}
\end{equation*}
This leads to the estimate:
\begin{equation*}
\begin{aligned}
\left| a(x+y_0\pi^n + \cdots + y_N\pi^{n+N}) - a(x) \right| &\leq \sqrt{p^f} L_{\mathcal D^R}(a) \left[  p^{-n/e} + p^{-(n+1)/e} + \cdots + p^{-(n+N)/e}\right]\\
& =  \sqrt{p^f} L_{\mathcal D^R}(a)p^{-n/e} \frac{1-p^{-(N+1)/e}}{1-p^{-1/e}}.
\end{aligned}
\end{equation*}
Since $a$ is continuous, taking $N \rightarrow \infty$ in the above inequality we obtain:
\begin{equation*}
|a(y)-a(x)| \leq  \sqrt{p^f} L_{\mathcal D^R}(a) \frac{p^{-n/e}}{1-p^{-1/e}}.
\end{equation*}
We have a similar inequality for $|a(z)-a(k)|$. Hence, we get:
\begin{equation*}
|a(y)-a(z)| \leq \frac{2\sqrt{p^f}}{1-p^{-1/e}} |y-z|_p L_{\mathcal D^R}(a).
\end{equation*}
Since this is true for every $y,z \in R$, we have:
\begin{equation*}
L_1(a) \leq \frac{2\sqrt{p^f}}{1-p^{-1/e}} L_{\mathcal D^R}(a),
\end{equation*}
from which the other side of the inequality follows.

If $x =0$, then we estimate similarly:
\begin{equation*}
\begin{aligned}
&\left| a(y_0\pi^n + \cdots + y_N\pi^{n+N}) - a(\pi^n) \right| \\
&\leq \left| a(\pi^n) - a(y_0\pi^n) \right| + \left| a(y_0\pi^n) - a(y_0\pi^n + y_1 \pi^{n+1}) \right|+\cdots + \\
&\left| a(y_0\pi^n + \cdots + y_{N-1}\pi^{n+N-1}) - a( y_0\pi^n + \cdots+ y_N \pi^{n+N}) \right|.
\end{aligned}
\end{equation*}
We consider separately two cases $y_0 \neq 0$, and $y_0 =0$. If $y_0 \neq 0$, then the above inequality implies that:
\begin{equation*}
\begin{aligned}
&\left| a(y_0\pi^n + \cdots + y_N\pi^{n+N}) - a(\pi^n) \right| \leq \sqrt{p^f} L_{\mathcal D^R}(a)p^{-n/e}\frac{1-p^{-(N+1)/e}}{1-p^{-1/e}},
\end{aligned}
\end{equation*}
and, by taking the limit as $N \rightarrow \infty$, we get: $$|a(y)- a(x)| \leq \sqrt{p^f} L_{\mathcal D^R}(a)\frac{p^{-n/e}}{1-p^{-1/e}}.$$  It follows that: $|a(y)- a(z)| \leq  \frac{2\sqrt{p^f}}{1-p^{-1/e}} |y-z|_p L_{\mathcal D^R}(a)$, whenever $y,z$ are not both zero.

Now suppose $y_0 =0$ and $y_M \neq 0$ for some $M$. Then, we have:
\begin{equation*}
\begin{aligned}
&\left| a(y_M\pi^{n+M} + \cdots + y_N\pi^{n+N}) - a(\pi^n) \right| \\
&\leq \left| a(\pi^n) - a(y_M\pi^{n+M}) \right| + \left| a(y_M\pi^{n+M}) - a(y_M\pi^{n+M} + y_{M+1} \pi^{n+M+1}) \right|+\cdots + \\
&\left| a(y_M\pi^{n+M} + \cdots + y_{N-1}\pi^{n+N-1}) - a( y_M\pi^{n+M} + \cdots+ y_N \pi^{n+N}) \right|.
\end{aligned}
\end{equation*}
The first term in the sum on the right hand side can be estimated as:
\begin{equation*}
\begin{aligned}
\left| a(\pi^n) - a(y_M\pi^{n+M}) \right| \leq & \left| a(\pi^n) - a(\pi^{n+1}) \right| + \left| a(\pi^{n+1}) - a(y_M\pi^{n+2}) \right| +\cdots \\
& \cdots + \left| a(\pi^{n+M-1}) - a(\pi^{n+M}) \right| .\\
\end{aligned}
\end{equation*}
Consequently, we arrive at the estimate:
\begin{equation*}
\begin{aligned}
&\left| a(y_M\pi^{n+M} + \cdots + y_N\pi^{n+N}) - a(\pi^n) \right| \\
&\leq \sqrt{p^f} L_{\mathcal D^R}(a) p^{-n/e}\left[ p^{\frac{-(n+N)}e} + p^{\frac{-(n+N-1)}e} + \cdots + p^{\frac{-(n+M)}e} + p^{\frac{-(n+M-1)}e} + \cdots + p^{\frac{-n}e}\right].\\
\end{aligned}
\end{equation*}
Taking the limit as $N \rightarrow \infty$ as before, we can once again establish the inequality \[|a(y)- a(z)| \leq  \frac{2\sqrt{p^f}}{1-p^{-1/e}} |y-z|_p L_{\mathcal D^R}(a),\] whenever $y,z$ are not both zero.

Now we consider the case where one of $y$ or $z$ is zero, say $z=0$. In this case, take a sequence of integers $z_k \in R$ such that $z_k \rightarrow 0$ as $k\rightarrow \infty$ and $|z_k|< |y|$. Then, we obtain:
\begin{equation*}
|a(y)-a(0)| \leq 2 |a(y)- a(z_k)| + |a(z_k) - a(0)| \leq \frac{2 \sqrt {p^f}}{1-p^{-1/e}} L_{\mathcal D^F}(a)|y| + |a(z_k) - a(0)|
\end{equation*}
for every $k$. Since $|a(z_k)- a(0)| \rightarrow \infty$ as $k \rightarrow \infty$, we have:
\begin{equation*}
|a(y)-a(0)| \leq \frac{2 \sqrt {p^f}}{1-p^{-1/e}} |y| L_{\mathcal D^F}.
\end{equation*}
Summarizing, in all of the above subcases of case $x=0$ it follows that $L_1(a) \leq \frac{2 \sqrt {p^f}}{1-p^{-1/e}} L_{\mathcal D^F}$. Therefore, the inequality in the theorem is established.
\end{proof}

As an immediate consequence of Lemma \ref{norm_comp} we have the following corollary.
\begin{cor}\label{Lipschitz}\  
\begin{enumerate}
\item Consider the algebra given by $C^1(R):=\{ a\in C(R): \|[D, \rho_R(a)]\| < \infty\}$. Then $C^1(R) = \mathcal A_R$. 
\item The commutant of $C(R)$, i.e., the algebra $\{a \in C(R)\; : \; [D, \rho_R(a) ] =0\}$, is the trivial algebra consisting of constant functions.
\end{enumerate}
\end{cor}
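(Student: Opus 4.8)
The plan is to read both statements off directly from the two-sided comparison of seminorms in Theorem \ref{norm_comp}, which asserts that for every $a$ one has $\frac{p^{1/e}-1}{2p^{1/e}\sqrt{p^f}}\,L_1(a) \leq L_{\mathcal D^R}(a) \leq \sqrt{\frac{p^f-1}{p^f}}\,L_1(a)$. The one point that needs a moment's care is that Theorem \ref{norm_comp} is stated for $a\in\mathcal A_R$, whereas for Corollary \ref{Lipschitz} we need the \emph{lower} bound to be available for arbitrary $a\in C(R)$. So the first step is to note that the proof of the lower bound in Theorem \ref{norm_comp} — the telescoping estimate along the $p$-adic expansion combined with summing the geometric series $\sum_k p^{-(n+k)/e}$ — uses only the formula for $L_{\mathcal D^R}(a)$ from Lemma \ref{norm_form} and the continuity of $a$; it never invokes an a priori Lipschitz bound on $a$. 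Hence the inequality $L_1(a) \leq \frac{2\sqrt{p^f}}{1-p^{-1/e}}\,L_{\mathcal D^R}(a)$ holds for every $a\in C(R)$ (and is trivial when the right-hand side is $\infty$).

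For part (1): the inclusion $\mathcal A_R \subseteq C^1(R)$ is already contained in the proof of Theorem \ref{sptr_for_R}, where $\|[D^R,\rho_R(a)]\| \leq L_1(a) < \infty$ was shown for Lipschitz $a$; equivalently it is the upper bound of Theorem \ref{norm_comp}. For the reverse inclusion $C^1(R)\subseteq \mathcal A_R$, apply the lower bound just discussed: if $a\in C^1(R)$ then $L_{\mathcal D^R}(a) = \|[D,\rho_R(a)]\| < \infty$, hence $L_1(a) < \infty$, i.e.\ $a$ is Lipschitz and so $a\in\mathcal A_R$.

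For part (2): if $[D,\rho_R(a)] = 0$ then $L_{\mathcal D^R}(a) = 0$, so by the lower bound $L_1(a) = 0$, which forces $|a(x)-a(y)| \leq 0$ for all $x\neq y$ in $R$, i.e.\ $a$ is constant. Conversely, a constant function has $L_1(a) = 0$, and the upper bound of Theorem \ref{norm_comp} gives $L_{\mathcal D^R}(a) = 0$, so $[D,\rho_R(a)] = 0$ (this direction is also immediate from the commutator formula displayed in the proof of Theorem \ref{sptr_for_R}). I do not expect any genuine obstacle here; the only thing to verify carefully is the claim made in the first paragraph, namely that the geometric-series argument proving the lower bound in Theorem \ref{norm_comp} did not tacitly assume finiteness of $L_1(a)$ — which it did not.
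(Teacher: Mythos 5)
Your proposal is correct and is essentially the argument the paper intends (the paper simply defers to Corollary 5.4 of \cite{KMR}, whose proof is the same two-sided seminorm comparison). Your key observation --- that the lower bound $L_1(a) \leq \frac{2\sqrt{p^f}}{1-p^{-1/e}}\,L_{\mathcal D^R}(a)$ rests only on Lemma \ref{norm_form} and the continuity of $a$, so it is valid for every $a \in C(R)$ and not just for $a \in \mathcal A_R$ --- is exactly the point that makes both parts of the corollary follow.
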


\begin{proof}
The proof is similar to that of Corollary 5.4 in \cite{KMR}.
\end{proof}

From the above results now follows that the usual $p$-adic metric on $R$ is equivalent to the metric induced my the spectral seminorm.

\begin{theo} For every $x,y \in R$ we have:
\begin{equation*}
\frac{p^{1/e}-1}{2p^{1/e} \sqrt{p^f}} |x-y|_p \leq \textnormal{dist}_{\mathcal D^R}(x,y) \leq \sqrt{\frac{p^f-1}{p^f}}|x-y|_p.
\end{equation*}
\end{theo}

\begin{proof}
From Theorem \ref{norm_comp}, it is clear that:
\begin{equation*}
\textnormal{dist}_{\mathcal D^R}(x,y) =\sup_{a: L_{\mathcal D^R (a) }\neq 0} \frac {|a(x)-a(y)|}{L_{\mathcal D^R (a)}}.
\end{equation*}
It follows from Corollary \ref{Lipschitz} that $L_1(a)=0$ if and only if $L_{\mathcal D^R}=0$. Consequently, we can write:
\begin{equation*}
 \frac {|a(x)-a(y)|}{L_{\mathcal D^R }(a)}=  \frac {|a(x)-a(y)|}{L_1(a)} \frac{L_1(a)}{L_{\mathcal D^R }(a)},
\end{equation*}
and the result now follows from Theorem  \ref{norm_comp}.
\end{proof}

Additionally, we have the following straightforward analog of Theorem 5.6 in \cite{KMR}.

\begin{theo}
The pair $(C(R), L_{\mathcal D^R})$ is a compact spectral metric space.
\end{theo}

\subsection{A Spectral triple for $C_0(F)$.}

Let $\mathcal A_F$ be the following subalgebra of $C_0(F)$:
\begin{equation*}
\mathcal A_F = \left\{ a\in C_0(F) :  a \textnormal{ is Lipschitz continuous,  }   |a(x)|= \mathcal O \left(\frac{1}{1+|x|^{\alpha}}\right) ,\  x\in F \right\},
\end{equation*}
where $\alpha >1, \alpha > \frac{ef}2$.
It is an easy consequence of the Stone-Weierstrass theorem that $\mathcal A_F$ is a dense $*$-subalgebra of $C_0(F)$. We use $L_2: C_0(F) \rightarrow [0, \infty]$ to denote the Lipschitz norm of a function $a \in C_0(F)$.

Once again, we take $\mathcal H^F = H^F \oplus H^F$ and define $\mathcal D^F: \mathcal H^F \rightarrow \mathcal H^F$ to be:
\begin{equation*}
\mathcal D^F= \left(
\begin{array}{cc}
0 & D^F \\
(D^F)^{\ast} & 0
\end{array}\right).
\end{equation*}

The representation $\Pi_F: C_0(F) \rightarrow \mathcal B(\mathcal H^F)$ defined by $\Pi_F= \rho_F \oplus \rho_F$ is even, faithful and non-degenerate. Consequently, we have all the ingredients to define a spectral triple for $C_0(F)$.
\begin{theo}
The triple $(\mathcal A_F, \mathcal H^F, \mathcal D^F)$ is an even spectral triple for the C$^*$algebra $C_0(F)$. 
\end{theo}

\begin{proof}
The computation involved in showing that the commutator $[\mathcal D^F, \Pi_F(a)]$ is bounded for every $a \in \mathcal A_F$ is essentially the same as the computation in the proof of Theorem \ref{sptr_for_R} showing that $[\mathcal D^R, \Pi_R(a)]$ is bounded, and thus we will not repeat it. Then by Theorem \ref{compact_op} it follows that the above triple is an even spectral triple.
\end{proof}

Similarly to what we did with the operator $\mathcal D^R$, we can compare the metrics induced by the Lipschitz seminorm $L_2: C_0(F) \rightarrow [0, \infty]$ and $L_{\mathcal D^F}: C_0(F)  \rightarrow [0, \infty]$ defined by $L_{\mathcal D^F}= \left\|[ \mathcal D^F, \Pi_F(a)]\right\| = \left\| [ D^F, \rho_F(a)]\right\| $.  The distance functions induced by these norms on $F$ are, respectively:
\begin{equation*}
\begin{aligned}
\textnormal{dist}_2(x,y)&=\sup_{a\in C_0(F)} \left\{ \left|a(x)-a(y)\right| \; : \; L_2(a) \leq 1\right\}\\
\textnormal{dist}_{\mathcal D^F}(x,y)&=\sup_{a\in C_0(F)} \left\{ \left|a(x)-a(y)\right| \; : \; L_{\mathcal D^F}(a) \leq 1\right\}.
\end{aligned}
\end{equation*}

\begin{lem} For $a \in C_0(F)$ we have:
\begin{equation*}
\begin{aligned}
L_{\mathcal D^F}(a)^2=   \sup_{\substack{n\in \mathbb Z \\ { 0 \neq x\in X_n^F}}} & \left\{ \frac 1{p^f} \sum_{i=1}^{p^f-1} \frac{\left|a(x)-a(x+s_i \pi^n)\right|^2}{p^{-2n/e}},   \frac 1{p^f}  \sum_{i=1}^{p^f-1} \frac{\left|a(\pi^n)-a(\pi^{n+1})\right|^2}{p^{-2n/e}}, \right. \\
&\left. \frac 1{p^f} \sum_{i=1}^{p^f-1} \frac{\left|a(\pi^n)-a(s_i \pi^n)\right|^2}{p^{-2n/e}}\right \}.
\end{aligned}
\end{equation*}
\end{lem}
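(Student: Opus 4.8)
The plan is to mimic, almost verbatim, the argument behind Lemma~\ref{norm_form} (itself modeled on Lemma~5.2 of \cite{KMR}); the only genuine differences are that the level index $n$ now ranges over all of $\mathbb Z$ instead of $\mathbb Z_{\geq 0}$, and that the ball containing $0$ carries the renormalized center $\pi^n$ dictated by the definition of $\rho_F$. First I would reduce $L_{\mathcal D^F}(a)=\|[\mathcal D^F,\Pi_F(a)]\|$ to $\|[D^F,\rho_F(a)]\|$: since $\mathcal D^F$ is off-diagonal and $\Pi_F=\rho_F\oplus\rho_F$ is diagonal, the commutator $[\mathcal D^F,\Pi_F(a)]$ is the off-diagonal matrix with corners $[D^F,\rho_F(a)]$ and $[(D^F)^*,\rho_F(a)]=[D^F,\rho_F(a)]^*$, hence has norm equal to $\|[D^F,\rho_F(a)]\|$. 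Then, exactly as in the proof of Theorem~\ref{sptr_for_R}, I would record the explicit commutator
\[
\big([D^F,\rho_F(a)]\phi\big)_n(x)= \frac{1}{p^f}\sum_{s\in S}\frac{a_n(x)-a_{n+1}(x+s\pi^n)}{p^{-n/e}}\,\phi_{n+1}(x+s\pi^n),
\]
where $a_n(x)=a(x)$ for $x\neq 0$ and $a_n(0)=a(\pi^n)$.

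The key structural point is that $[D^F,\rho_F(a)]$ respects the tree: its value at a vertex $v=(n,x)$ depends only on $\phi_{n+1}$ at the $p^f$ children $x+s\pi^n$, $s\in S$, of $v$, and since these children sets partition $V_{n+1}^F$ (and $v$ ranges over all of $V^F$), the operator decomposes as an orthogonal direct sum $[D^F,\rho_F(a)]=\bigoplus_{v\in V^F}T_v$ of ``children-to-vertex'' operators. Consequently $\|[D^F,\rho_F(a)]\|^2=\sup_{v\in V^F}\|T_v\|^2$. I would next compute $\|T_v\|^2$ for $v=(n,x)$: viewing $T_v$ as the linear functional $u\mapsto\sum_{s\in S}\beta_s u_s$ with $\beta_s=p^{-f}p^{n/e}\big(a_n(x)-a_{n+1}(x+s\pi^n)\big)$ from the $p^f$-dimensional children space (weight $p^{-(n+1)f}$) to the one-dimensional space at $v$ (weight $p^{-nf}$), the Cauchy--Schwarz inequality gives $\|T_v\|^2\le p^f\sum_{s\in S}|\beta_s|^2$, with equality realized by the test vector $u_s=\overline{\beta_s}$. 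This yields
\[
\|T_v\|^2=\frac{1}{p^f}\sum_{s\in S}\frac{|a_n(x)-a_{n+1}(x+s\pi^n)|^2}{p^{-2n/e}}.
\]

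Finally I would split into the two cases. If $x\neq 0$, then $a_n(x)=a(x)$, the $s=0$ summand vanishes, and $x+s_i\pi^n\neq 0$ for $s_i\neq 0$ so that $a_{n+1}(x+s_i\pi^n)=a(x+s_i\pi^n)$; this produces the first quantity in the statement. If $x=0$, the $s=0$ summand contributes $|a(\pi^n)-a(\pi^{n+1})|^2/p^{-2n/e}$ while the $s=s_i\neq 0$ summands contribute $|a(\pi^n)-a(s_i\pi^n)|^2/p^{-2n/e}$, which account for the remaining terms of the supremum. Taking the supremum over all $v=(n,x)\in V^F$ gives the claimed identity in $[0,\infty]$.

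The part requiring the most care is the book-keeping at $x=0$, where the renormalized centers $\pi^n$ replace $0$ in $\rho_F$ and the $s=0$ term no longer drops out. A secondary point is that, because $n$ now runs over all of $\mathbb Z$, one must verify that the direct-sum decomposition into the $T_v$ is still valid over the bi-infinite tree and that the resulting supremum is the honest operator norm (possibly $+\infty$ for a general $a\in C_0(F)$); for $a\in\mathcal A_F$ this is finite by the Lipschitz and decay hypotheses, in line with Theorem~\ref{compact_op}.
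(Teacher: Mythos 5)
Your proof is correct and follows essentially the same route as the paper, which simply defers to Lemma~\ref{norm_form} and, ultimately, to Lemma~5.2 of \cite{KMR}: reduce $L_{\mathcal D^F}(a)$ to $\|[D^F,\rho_F(a)]\|$, decompose the commutator into the rank-one vertex blocks $T_v$ (valid on the bi-infinite tree for exactly the reason you give), and evaluate $\|T_v\|^2$ by Cauchy--Schwarz against the weights $p^{-nf}$. One bookkeeping caveat: at the vertex $(n,0)$ your computation yields the single combined quantity $\frac 1{p^f}\sum_{s\in S}\left|a(\pi^n)-a_{n+1}(s\pi^n)\right|^2/p^{-2n/e}$, whereas the lemma lists the $s=0$ and $s\neq 0$ contributions as two separate entries of the supremum (the former carrying an extra factor $p^f-1$), and these agree only up to constants depending on $p^f$; this mismatch is inherited from the statement of Lemma~\ref{norm_form} itself and is immaterial for the norm-equivalence theorem that follows, but you should either record the combined expression or note explicitly that the displayed formula holds up to such constants.
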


\begin{proof}
The proof is similar to proof of Lemma \ref{norm_form}.
\end{proof}

Using the above spectral seminorm formula, once again we can compare the norms $L_2$ , $L_{\mathcal D^F}$ and their induced metrics. We summarize these results in the following theorem.

\begin{theo} For every $a\in \mathcal A_{F}$ and $x,y \in F$ the following are true:
\begin{enumerate}
\item $\frac{p^{1/e}-1}{2p^{1/e}\sqrt{p^f}}L_2(a) \leq L_{\mathcal D^F}(a) \leq \sqrt{\frac{p^f -1}{p^f}} L_1(a)$,
\item $\frac{p^{1/e}-1}{2p^{1/e}\sqrt{p^f}} |x-y|_p \leq \textnormal{dist}_{\mathcal D^F} (x,y) \leq \sqrt{\frac{p^f -1}{p^f}} |x-y|_p$.
\end{enumerate}
\end{theo}

\bigskip
\section{Spectral properties.}

In section \ref{derivative_section} we saw that the operators $\widehat D^R$ and $(\widehat D^R)^*$ assume simpler expressions in the parametrization with coordinates $l,g$.  We will need the following notation:
\begin{equation*}
\begin{aligned}
\widehat D_0^R \widehat \phi(l)& := p^{\frac le} \left(\widehat \phi (l) - \widehat \phi(l+1)\right)\;\; \textnormal{ and }\\
(\widehat D_0^R)^*\widehat \phi(l) & :=  p^{\frac le} \left( \widehat \phi (l) -\frac 1{p^{1/e}}\widehat \phi(l-1)\right).
\end{aligned}
\end{equation*}
For $|g|=p^{m/e}$, this allows us to write: 
\begin{equation*}
\widehat D_g^R= p^{m/e} \widehat D_0^R \;\;\textnormal { and } \;\; (\widehat D_g^R)^*= p^{m/e}( \widehat D_0^R )^*,
\end{equation*}
which consequently gives the decomposition:
\begin{equation}\label{decom}
(\widehat D^R)^*\widehat D^R = \bigoplus_{g \in F/R} (\widehat D_g^R)^*\widehat D_g^R =  \bigoplus_{g \in F/R}  p^{2m/e}( \widehat D_0^R )^* \widehat D_0^R. 
\end{equation}
The eigenvalue equations for $( D_0^R )^* D_0^R$ are:
\begin{equation}\label{EVE}
\begin{aligned}
(D_0^R )^* D_0^R \phi(0)& = \phi(0) -\phi(1) =\lambda \phi(0)\\
 (D_0^R )^* D_0^R \phi(l)& = p^{\frac{2(l-1)}e} \left[ -p^{\frac 2e}\phi(l+1)+ (1+ p^{\frac 2e}) \phi(l)-\phi(l-1) \right] = \lambda \phi(l),\;\; \textnormal{ for } l\geq 1, 
\end{aligned}
\end{equation}
with $\phi(l) \in \ell^2(\mathbb Z_{\geq 0})$.

\begin{lem}
The eigenvectors  $\phi(l, \lambda)$ of $(D_0^R )^* D_0^R $ satisfy the following:
\begin{enumerate}
\item $\phi(l, \lambda) \in \ell^1(\mathbb Z_{\geq 0})$ and $\phi(l, \lambda)=\sum_{k=1}^\infty c(2k)p^{-2nk/e}$ where 
\begin{equation*}
c(2k)=\left(\frac{-\lambda}{1-p^{-2/e}}\right)^{k-1} \frac{c(2)p^{\frac{k(k-1)}{e}} (p^{2/e}-1)^{k-2}}{(p^{4/e}-1)^2 (p^{6/e}-1)^2 \cdots (p^{(2k-2)/e}-1)^2 (p^{2k/e}-1)},
\end{equation*}
for $ k\geq 2$, and $$c(2)= \left(\frac{\lambda}{1-p^{-2/e}}\right) \sum_{k=0}^\infty \phi(l).$$
\item If $r_n(2N)$ is the remainder:
$$\phi(l,\lambda)=c(2) p^{-2n/e} + c(4) p^{-4n/e} + \cdots + c(2N-2)p^{-(2N-2)n/e} + r_n(2N),$$
 then $r_n(2N) \rightarrow 0$ as $N \rightarrow \infty$ in the $\ell^2$ norm.
\end{enumerate}
\end{lem}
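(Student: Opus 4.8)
The plan is to analyze the three-term recurrence in \eqref{EVE} directly via a generating-function (power-series) substitution. First I would substitute the ansatz $\phi(l,\lambda)=\sum_{k\geq 1} c(2k)\,p^{-2kl/e}$ into the second equation of \eqref{EVE}; because each shift $l\mapsto l\pm 1$ multiplies the term $p^{-2kl/e}$ by $p^{\mp 2k/e}$, collecting powers of $p^{-2kl/e}$ turns the recurrence into an explicit two-step relation between $c(2k)$ and $c(2k+2)$ (with $\lambda$ entering linearly). Solving that relation by iteration yields the closed product formula claimed for $c(2k)$, $k\geq 2$, in terms of $c(2)$; one has to bookkeep the factors $(p^{2/e}-1)$, $(p^{4/e}-1)^2,\dots,(p^{2k/e}-1)$ carefully, but it is a routine induction once the one-step relation is in hand. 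The boundary equation $\phi(0)-\phi(1)=\lambda\phi(0)$ is then a single linear constraint; rewriting $\phi(0)-\phi(1)=\sum_k c(2k)(1-p^{-2k/e})$ and using the $k=1$ piece together with the recurrence collapses this to the stated formula $c(2)=\bigl(\lambda/(1-p^{-2/e})\bigr)\sum_{l}\phi(l)$, i.e.\ $c(2)$ is pinned down by the total sum of $\phi$, which is the natural normalization here.

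Next I would establish the $\ell^1$ (hence $\ell^2$) claim. The product formula shows that $|c(2k)|$ decays super-exponentially in $k$: the numerator carries $p^{k(k-1)/e}$ while the denominator carries $p^{(2+4+\cdots+2k)/e}=p^{k(k+1)/e}$ roughly, so the ratio behaves like $p^{-2k/e}$ times bounded factors, and the extra $(p^{2/e}-1)^{k-2}$ and $\lambda^{k-1}$ only contribute single-exponential factors. Consequently $\sum_k |c(2k)|$ converges, and then for each fixed $\lambda$,
\begin{equation*}
\sum_{l\geq 0}|\phi(l,\lambda)| \leq \sum_{l\geq 0}\sum_{k\geq 1}|c(2k)|\,p^{-2kl/e} = \sum_{k\geq 1}\frac{|c(2k)|}{1-p^{-2k/e}} < \infty,
\end{equation*}
so $\phi(\cdot,\lambda)\in\ell^1(\mathbb Z_{\geq 0})\subset\ell^2(\mathbb Z_{\geq 0})$; this also justifies the interchange of sums used above and the manipulations of $\phi(0)-\phi(1)$ in part (1).

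For part (2), I would note that the remainder is $r_n(2N)=\sum_{k\geq N}c(2k)\,p^{-2kn/e}$, so
\begin{equation*}
\|r_{\cdot}(2N)\|_{\ell^2}^2 = \sum_{n\geq 0}\Bigl|\sum_{k\geq N}c(2k)p^{-2kn/e}\Bigr|^2 \leq \sum_{n\geq 0}\Bigl(\sum_{k\geq N}|c(2k)|\,p^{-2kn/e}\Bigr)^2,
\end{equation*}
and since $p^{-2kn/e}\leq p^{-2Nn/e}$ for $k\geq N$, this is bounded by $\bigl(\sum_{k\geq N}|c(2k)|\bigr)^2\sum_{n\geq 0}p^{-4Nn/e}$, which is a tail of the convergent series $\sum|c(2k)|$ times a bounded geometric factor, hence $\to 0$ as $N\to\infty$. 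I would split out each fixed-$l$ partial-sum term $c(2)p^{-2n/e}+\cdots+c(2N-2)p^{-(2N-2)n/e}$ exactly matching the displayed expansion, so that what remains really is $r_n(2N)$ as defined.

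The main obstacle I anticipate is purely algebraic: getting the product formula for $c(2k)$ with the precise exponents and the correct pattern of squared versus unsquared factors $(p^{2j/e}-1)$. The translation of the recurrence into a one-step relation for the coefficients, and then the convergence estimates, are straightforward; but matching the author's exact closed form — in particular verifying the exponent $k(k-1)/e$ on $p$ and the $(p^{2/e}-1)^{k-2}$ factor — will require careful index tracking through the induction, and is the step where sign or off-by-one errors are most likely to creep in.
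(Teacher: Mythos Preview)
Your proposal is sound and carries out the natural direct argument. The paper itself does not give an independent proof here: it simply observes that the eigenvalue equations \eqref{EVE} coincide with those in \cite{KMR} after the substitution $p\mapsto p^{1/e}$, and defers the entire argument to Lemma~4.1 of that reference. What you outline---substituting the ansatz $\phi(l,\lambda)=\sum_{k\ge 1}c(2k)p^{-2kl/e}$ into the three-term recurrence, reading off the one-step relation between $c(2k)$ and $c(2k+2)$, iterating to the closed product, and then handling the $\ell^1$ and $\ell^2$ tail estimates---is exactly the computation that the citation is standing in for. One small remark on your reading of the boundary identity: the displayed formula $c(2)=\bigl(\lambda/(1-p^{-2/e})\bigr)\sum_{l}\phi(l)$ is not a normalization that \emph{determines} $c(2)$ (the right-hand side is itself proportional to $c(2)$); rather, it is the scalar identity equivalent to the $l=0$ boundary equation once the recurrence for $l\ge 1$ is in force, and it becomes the eigenvalue condition on $\lambda$. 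With that caveat, your plan matches the intended argument and your anticipated obstacle (bookkeeping the squared versus unsquared factors $(p^{2j/e}-1)$ and the exponent $k(k-1)/e$) is indeed the only delicate point.
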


\begin{proof}
Comparing the eigenvalue equations (equations 4.1) in \cite{KMR} and the equations \eqref{EVE} above we see that the only difference is that $p$ is replaced by $p^{1/e}$. Hence the proof of this lemma follows from Lemma 4.1 in \cite{KMR}: 
\end{proof}

Using the above lemma we can now find the spectrum of the operator $(D^R )^* D^R$. Below we describe the eigenvalues of $(D^R )^* D^R$ as the roots of the $q$-hypergeometric function \cite{GR}:
\begin{equation*}
_1\phi_1\left(
\begin{array}{c@{}c@{}c}
\begin{array}{c}
 0\\
 q\\
\end{array} ;&\ q ,&\ z\\
\end{array}\right)=\sum_{n=0}^\infty \frac{1}{(q;q)_n(q;q)_n}(-1)^nq^{\binom{n}{2}}z^n, \;\;\; \textnormal{ with } q=p^{-2/e}.
\end{equation*}

\begin{theo}
Let $\lambda_n$ be the roots of $\lambda \mapsto  _1\phi_1\left(\substack {0\\q};q,\lambda \right)$ with $q=p^{\frac{-2}e}$ arranged in the increasing order. Then the following are true:
\begin{enumerate}
\item $\sigma((D_0^R)^*D_0^R)= \{\lambda_n \;: n \in \mathbb Z_{\geq 0} \}$, and each $\lambda_n$ is a simple eigenvalue.
\item $\sigma((D^R)^*D^R) = \bigcup_{n,m \in \mathbb Z_{\geq 0}} \left\{ p^{2m/e}\lambda_n\right\},$ where, for $m\geq 1$, each eigenvalue occurs with multiplicity $p^{mf}(1-\frac 1{p^f})$.
\end{enumerate}
\end{theo}

\begin{proof}
Showing that eigenvalues of $(D_0^R)^*D_0^R$ are roots of $_1\phi_1\left(\substack {0\\q};q,\lambda \right)$ follows from the proof  of Theorem 4.2 in \cite{KMR}.
Finding the spectrum of $(D^R)^*D^R$ is a straightforward calculation using the decomposition \eqref{decom}. The number of elements $g \in F/R$ with $|g|=p^{m/e}$ for $m\geq 1$ is $p^{mf}-p^{(m-1)f}$, so the multiplicity of each eigenvalue of $(D^R)^*D^R$ is $p^{mf}(1-\frac 1{p^f})$.
\end{proof}

\begin{cor}
$(D^R)^{-1}$ is in $s$-Schatten class for all $s\geq ef$.
\end{cor}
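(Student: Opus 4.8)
The plan is to estimate the singular values of $(D^R)^{-1}$ using the decomposition $(D^R)^{-1} = \bigoplus_{g \in F/R} (\widehat D_g^R)^{-1}$ established in the proof of Theorem 4.1, together with the just-computed spectral data for $(D^R)^*D^R$. Recall that $(D^R)^{-1}$ being in the $s$-Schatten class means $\operatorname{tr}\bigl(((D^R)^{-1})^*(D^R)^{-1}\bigr)^{s/2} = \sum \mu_j^s < \infty$, where the $\mu_j$ are the singular values of $(D^R)^{-1}$; equivalently $\bigl((D^R)^*D^R\bigr)^{-1}$ should be in the $(s/2)$-Schatten class, i.e. $\sum \nu_j^{-s/2} < \infty$ where the $\nu_j$ run over the eigenvalues of $(D^R)^*D^R$ listed with multiplicity.

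First I would record from the preceding theorem that the eigenvalues of $(D^R)^*D^R$ are exactly the numbers $p^{2m/e}\lambda_n$ for $n, m \in \mathbb Z_{\geq 0}$, with multiplicity $1$ when $m=0$ and multiplicity $p^{mf}(1 - p^{-f})$ when $m \geq 1$. Therefore
\begin{equation*}
\sum_j \nu_j^{-s/2} = \sum_{n \geq 0} \lambda_n^{-s/2} \left( 1 + (1 - p^{-f}) \sum_{m \geq 1} p^{mf} p^{-ms/e} \right).
\end{equation*}
The geometric series $\sum_{m\geq 1} p^{m(f - s/e)}$ converges precisely when $f - s/e < 0$, i.e. when $s > ef$; this is the source of the stated threshold. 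I would then need to confirm that the remaining factor $\sum_{n\geq 0}\lambda_n^{-s/2}$ is finite, which requires a lower bound on the growth of the roots $\lambda_n$ of $_1\phi_1(\substack{0\\q};q,\lambda)$: since these are the eigenvalues of $(D_0^R)^*D_0^R$, which has compact inverse, one knows $\lambda_n \to \infty$, but for summability one needs $\lambda_n$ to grow fast enough — in fact from the structure of $q$-hypergeometric zeros (or directly from a Hilbert–Schmidt estimate on $(\widehat D_0^R)^{-1}$, which gives $\sum_n \lambda_n^{-1} < \infty$) one gets convergence of $\sum_n \lambda_n^{-s/2}$ for any $s \geq ef \geq 1 \geq 2/ (\text{something})$; more carefully, $\sum_n \lambda_n^{-1}<\infty$ already forces $\sum_n\lambda_n^{-s/2}<\infty$ for all $s\geq 2$, and a small separate argument handles $ef \le s < 2$ when $e\geq 2$, using that the $\lambda_n$ are bounded below away from $0$ and still summable against $\lambda_n^{-ef/2}$.

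I expect the main obstacle to be precisely this control of $\sum_n \lambda_n^{-s/2}$ for the smaller exponents $s < 2$ that can occur when $ef$ is small relative to $2$ — equivalently, establishing a strong enough lower growth rate for the roots $\lambda_n$. The cleanest route is probably to avoid delicate asymptotics of $q$-hypergeometric zeros altogether: note $(\widehat D_0^R)^{-1}$ is Hilbert–Schmidt (indeed equation \eqref{compact_components} with $|g|=1$ gives $\|(\widehat D_0^R)^{-1}\|_{HS}^2 = (1-p^{-2/e})^{-2} < \infty$), so $\sum_n \lambda_n^{-1} < \infty$, and since additionally each $\lambda_n \ge \lambda_0 > 0$, we get $\lambda_n^{-s/2} \le \lambda_0^{-(s/2 - 1)} \lambda_n^{-1}$ whenever $s \ge 2$, and $\lambda_n^{-s/2}\le \lambda_n^{-1}\cdot\lambda_n^{1-s/2}\le \lambda_0^{1-s/2}\lambda_n^{-1}$ is not valid for $s<2$; instead for $s < 2$ use $\lambda_n^{-s/2} = \lambda_n^{-1}\lambda_n^{1-s/2}$ is increasing, so one genuinely needs $\sum_n \lambda_n^{-s/2} < \infty$ for $s$ possibly less than $2$, which does hold because in fact $\lambda_n$ grows like a constant times $p^{2n/e}$ (the eigenvalues of the Jacobi-type operator in \eqref{EVE} have exponential growth), giving $\sum_n \lambda_n^{-s/2} \asymp \sum_n p^{-ns/e} < \infty$ for every $s > 0$. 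Once both factors are shown finite for $s \ge ef$, the corollary follows; I would present it by combining the exponential lower bound on $\lambda_n$ (cited from the analysis in \cite{KMR} or derived from \eqref{EVE}) with the multiplicity count, and noting the boundary case $s = ef$ works because the $\lambda_n^{-s/2}$ factor provides strict convergence even though the geometric sum in $m$ only barely diverges — so one should double-check whether $s=ef$ is actually included or whether the statement should read $s > ef$; if the $m$-sum diverges at $s = ef$ then the corollary as stated needs $s > ef$, and I would flag that the displayed ``$s \ge ef$'' may need to be ``$s > ef$'' unless there is extra decay making the endpoint work.
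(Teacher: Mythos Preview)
Your approach is essentially the same as the paper's: both compute the Schatten sum via the eigenvalue list $p^{2m/e}\lambda_n$ (with the multiplicities from the preceding theorem), factor it as
\[
\Bigl(1+(1-p^{-f})\sum_{m\ge1}p^{m(f-s/e)}\Bigr)\sum_{n\ge0}\lambda_n^{-s/2},
\]
and argue that each factor is finite. For the $\lambda_n$-sum the paper does not go through the Hilbert--Schmidt detour you sketch; it simply quotes the two-sided estimate $p^{n/e}\bigl(1-\tfrac{p^{-2n/e}}{1-p^{-2n/e}}\bigr)\le\lambda_n\le p^{n/e}$ from \cite{ABC},\cite{KMR} (the roots of $_1\phi_1$ with $q=p^{-2/e}$), which immediately gives $\sum_n\lambda_n^{-t}<\infty$ for every $t>0$ and makes the middle portion of your argument unnecessary.

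Your flag on the endpoint is well taken: the geometric $m$-sum diverges at $s=ef$, so the computation (both yours and the paper's) only establishes membership in the $s$-Schatten class for $s>ef$. The paper's proof ends with ``for $s>ef/2$'' for the $m$-sum and then asserts ``$s\ge ef$'' in the conclusion; the strict inequality is what the argument actually delivers.
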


\begin{proof}
By the decomposition \ref{decom} we have:
\begin{equation*}
\left((D^R)^*D^R\right)^{-s} = \bigoplus_{g \in F/R} p^{\frac{-2ms}e} \left((D_0^R)^*D_0^R\right)^{-s},
\end{equation*}
and hence 
\begin{equation*}
\textnormal{Tr}\left((D^R)^*D^R\right)^{-s} = \sum_{n=0}^{\infty}\left(1+ \sum_{m=1}^{\infty} p^{\frac{-2ms}e} (p^{mf} - p^{(m-1)f})\right) \lambda_n^{-s}.
\end{equation*}
The upper and lower bounds for the eigenvalues are discussed in \cite{ABC}, \cite{KMR}. In our case we have the following estimates:
\begin{equation*}
p^{\frac ne} \left( 1- \frac{p^{-2n/e}}{1-p^{-2n/e}}\right) \leq \lambda_n \leq p^{\frac ne}, \;\;\; \textnormal{ for } n \geq 1.
\end{equation*}
Since $1- \frac{p^{-2n/e}}{1-p^{-2n/e}} \geq 1- \frac{1}{p^{2/e}-1}$ for $s>0$, we have:
\begin{equation*}
\sum_{n=1}^{\infty} \lambda_n^{-s} \leq \sum_{n=1}^{\infty} p^{\frac {-ns}e} \left( 1- \frac{p^{-2n/e}}{1-p^{-2n/e}}\right)^{-s}  \leq \sum_{n=1}^{\infty} p^{\frac {-ns}e} \left( 1- \frac{1}{p^{2/e}-1}\right)^{-s} <\infty.
\end{equation*}
 Moreover, we can estimate:
\begin{equation*}
\sum_{m=1}^{\infty} p^{\frac{-2ms}e} (p^{mf} - p^{(m-1)f})= (1-p^{-f}) \sum_{m=1}^\infty p^{m(f-\frac {2s}e)} = \frac{(1-p^{-f})p^{(f-\frac {2s}e)}}{1-p^{f-\frac{2s}e}}, \;\;\; \textnormal{ for } s> \frac{ef}{2}.
\end{equation*}
Consequently, the $s$-th Schatten norm of $(D^R)^{-1}$ is finite whenever $s \geq ef$.
\end{proof}

Let $\zeta_{D^R}$ and $\zeta_{D_0^R}$ be the zeta functions associated with the operators $(D^R)^*D^R$ and $(D_0^R)^*D_0^R$ respectively. We have:
\begin{equation*}
\zeta_{D^R}(s)=  \left(\frac{1-p^{-\frac{2s}e}}{1-p^{f-\frac{2s}e}} \right)\zeta_{D_0^R}(s).
\end{equation*}

\begin{theo} The zeta functions defined above have the following properties:

\begin{enumerate}
\item $\zeta_{D_0^R}$ is holomorphic for Re $s >0$ and can be analytically continued to a meromorphic function for Re $s > \frac{-2}e$.
\item $\zeta_{D^R}$ is meromorphic for Re$s > \frac{-2}e$ with poles at $s=\frac{2 \pi i ke}{\ln p}$ and $s = \frac e2 (f- \frac{2 \pi ik}{\ln p})$, where $k \in \mathbb Z$.
\end{enumerate}
\end{theo}

\begin{proof}
The proof is similar to that of Theorem 5.3 and Corollary 5.4 in \cite{KMR}.
\end{proof}

\end{document}